\long\def\ig#1{\relax}
\newdimen\tempdimen
\newdimen\xlen
\newdimen\ylen
\newsavebox{\tempboxa}%
\newsavebox{\tempboxb}%
\newsavebox{\tempboxc}%
\def\settypes(#1,#2,#3){\arrowtypea#1 \arrowtypeb#2 \arrowtypec#3}
\def\settoheight#1#2{\setbox\@tempboxa\hbox{#2}#1\ht\@tempboxa\relax}%
\def\settodepth#1#2{\setbox\@tempboxa\hbox{#2}#1\dp\@tempboxa\relax}%
\def\settokens[#1`#2`#3`#4]{%
     \def\tokena{#1}\def\tokenb{#2}\def\tokenc{#3}\def\tokend{#4}}
\def\setsqparms[#1`#2`#3`#4;#5`#6]{%
\arrowtypea #1
\arrowtypeb #2
\arrowtypec #3
\arrowtyped #4
\width #5
\height #6
}
\def\setpos(#1,#2){\xpos=#1 \ypos#2}
\def\bfig{\begin{picture}(\xext,\yext)(\xoff,\yoff)}
\def\efig{\end{picture}}
\def\putbox(#1,#2)#3{\put(#1,#2){\makebox(0,0){$#3$}}}
\def\settriparms[#1`#2`#3;#4]{\settripairparms[#1`#2`#3`1`1;#4]}%
\def\settripairparms[#1`#2`#3`#4`#5;#6]{%
\arrowtypea #1
\arrowtypeb #2
\arrowtypec #3
\arrowtyped #4
\arrowtypee #5
\width #6
\height #6
}
\def\resetparms{\settripairparms[1`1`1`1`1;500]\width 500}
\def\mvector(#1,#2)#3{
\put(0,0){\vector(#1,#2){#3}}%
\put(0,0){\vector(#1,#2){30}}%
}
\def\evector(#1,#2)#3{{
\arrowlength #3
\put(0,0){\vector(#1,#2){\arrowlength}}%
\advance \arrowlength by-30
\put(0,0){\vector(#1,#2){\arrowlength}}%
}}
\def\horsize#1#2{%
\settowidth{\tempdimen}{$#2$}%
#1=\tempdimen
\divide #1 by\unitlength
}
\def\vertsize#1#2{%
\settoheight{\tempdimen}{$#2$}%
#1=\tempdimen
\settodepth{\tempdimen}{$#2$}%
\advance #1 by\tempdimen
\divide #1 by\unitlength
}
\def\vertadjust[#1`#2`#3]{%
\vertsize{\tempcounta}{#1}%
\vertsize{\tempcountb}{#2}%
\ifnum \tempcounta<\tempcountb \tempcounta=\tempcountb \fi
\divide\tempcounta by2
\vertsize{\tempcountb}{#3}%
\ifnum \tempcountb>0 \advance \tempcountb by20 \fi
\ifnum \tempcounta<\tempcountb \tempcounta=\tempcountb \fi
}
\def\horadjust[#1`#2`#3]{%
\horsize{\tempcounta}{#1}%
\horsize{\tempcountb}{#2}%
\ifnum \tempcounta<\tempcountb \tempcounta=\tempcountb \fi
\divide\tempcounta by20
\horsize{\tempcountb}{#3}%
\ifnum \tempcountb>0 \advance \tempcountb by60 \fi
\ifnum \tempcounta<\tempcountb \tempcounta=\tempcountb \fi
}
\def\sladjust[#1`#2`#3]#4{%
\tempcountc=#4
\horsize{\tempcounta}{#1}%
\divide \tempcounta by2
\horsize{\tempcountb}{#2}%
\divide \tempcountb by2
\advance \tempcountb by-\tempcountc
\ifnum \tempcounta<\tempcountb \tempcounta=\tempcountb\fi
\divide \tempcountc by2
\horsize{\tempcountb}{#3}%
\advance \tempcountb by-\tempcountc
\ifnum \tempcountb>0 \advance \tempcountb by80\fi
\ifnum \tempcounta<\tempcountb \tempcounta=\tempcountb\fi
\advance\tempcounta by20
}
\def\putvector(#1,#2)(#3,#4)#5#6{{%
\xpos=#1
\ypos=#2
\run=#3
\rise=#4
\arrowlength=#5
\arrowtype=#6
\ifnum \arrowtype<0
    \ifnum \run=0
        \advance \ypos by-\arrowlength
    \else
        \tempcounta \arrowlength
        \multiply \tempcounta by\rise
        \divide \tempcounta by\run
        \ifnum\run>0
            \advance \xpos by\arrowlength
            \advance \ypos by\tempcounta
        \else
            \advance \xpos by-\arrowlength
            \advance \ypos by-\tempcounta
        \fi
    \fi
    \multiply \arrowtype by-1
    \multiply \rise by-1
    \multiply \run by-1
\fi
\ifnum \arrowtype=1
    \put(\xpos,\ypos){\vector(\run,\rise){\arrowlength}}%
\else\ifnum \arrowtype=2
    \put(\xpos,\ypos){\mvector(\run,\rise)\arrowlength}%
\else\ifnum\arrowtype=3
    \put(\xpos,\ypos){\evector(\run,\rise){\arrowlength}}%
\fi\fi\fi
}}
\def\putsplitvector(#1,#2)#3#4{
\xpos #1
\ypos #2
\arrowtype #4
\halflength #3
\arrowlength #3
\gap 140
\advance \halflength by-\gap
\divide \halflength by2
\ifnum \arrowtype=1
    \put(\xpos,\ypos){\line(0,-1){\halflength}}%
    \advance\ypos by-\halflength
    \advance\ypos by-\gap
    \put(\xpos,\ypos){\vector(0,-1){\halflength}}%
\else\ifnum \arrowtype=2
    \put(\xpos,\ypos){\line(0,-1)\halflength}%
    \put(\xpos,\ypos){\vector(0,-1)3}%
    \advance\ypos by-\halflength
    \advance\ypos by-\gap
    \put(\xpos,\ypos){\vector(0,-1){\halflength}}%
\else\ifnum\arrowtype=3
    \put(\xpos,\ypos){\line(0,-1)\halflength}%
    \advance\ypos by-\halflength
    \advance\ypos by-\gap
    \put(\xpos,\ypos){\evector(0,-1){\halflength}}%
\else\ifnum \arrowtype=-1
    \advance \ypos by-\arrowlength
    \put(\xpos,\ypos){\line(0,1){\halflength}}%
    \advance\ypos by\halflength
    \advance\ypos by\gap
    \put(\xpos,\ypos){\vector(0,1){\halflength}}%
\else\ifnum \arrowtype=-2
    \advance \ypos by-\arrowlength
    \put(\xpos,\ypos){\line(0,1)\halflength}%
    \put(\xpos,\ypos){\vector(0,1)3}%
    \advance\ypos by\halflength
    \advance\ypos by\gap
    \put(\xpos,\ypos){\vector(0,1){\halflength}}%
\else\ifnum\arrowtype=-3
    \advance \ypos by-\arrowlength
    \put(\xpos,\ypos){\line(0,1)\halflength}%
    \advance\ypos by\halflength
    \advance\ypos by\gap
    \put(\xpos,\ypos){\evector(0,1){\halflength}}%
\fi\fi\fi\fi\fi\fi
}
\def\putmorphism(#1)(#2,#3)[#4`#5`#6]#7#8#9{{%
\run #2
\rise #3
\ifnum\rise=0
  \puthmorphism(#1)[#4`#5`#6]{#7}{#8}{#9}%
\else\ifnum\run=0
  \putvmorphism(#1)[#4`#5`#6]{#7}{#8}{#9}%
\else
\setpos(#1)%
\arrowlength #7
\arrowtype #8
\ifnum\run=0
\else\ifnum\rise=0
\else
\ifnum\run>0
    \coefa=1
\else
   \coefa=-1
\fi
\ifnum\arrowtype>0
   \coefb=0
   \coefc=-1
\else
   \coefb=\coefa
   \coefc=1
   \arrowtype=-\arrowtype
\fi
\width=2
\multiply \width by\run
\divide \width by\rise
\ifnum \width<0  \width=-\width\fi
\advance\width by60
\if l#9 \width=-\width\fi
\putbox(\xpos,\ypos){#4}
{\multiply \coefa by\arrowlength
\advance\xpos by\coefa
\multiply \coefa by\rise
\divide \coefa by\run
\advance \ypos by\coefa
\putbox(\xpos,\ypos){#5} }%
{\multiply \coefa by\arrowlength
\divide \coefa by2
\advance \xpos by\coefa
\advance \xpos by\width
\multiply \coefa by\rise
\divide \coefa by\run
\advance \ypos by\coefa
\if l#9%
   \put(\xpos,\ypos){\makebox(0,0)[r]{$#6$}}%
\else\if r#9%
   \put(\xpos,\ypos){\makebox(0,0)[l]{$#6$}}%
\fi\fi }%
{\multiply \rise by-\coefc
\multiply \run by-\coefc
\multiply \coefb by\arrowlength
\advance \xpos by\coefb
\multiply \coefb by\rise
\divide \coefb by\run
\advance \ypos by\coefb
\multiply \coefc by70
\advance \ypos by\coefc
\multiply \coefc by\run
\divide \coefc by\rise
\advance \xpos by\coefc
\multiply \coefa by140
\multiply \coefa by\run
\divide \coefa by\rise
\advance \arrowlength by\coefa
\ifnum \arrowtype=1
   \put(\xpos,\ypos){\vector(\run,\rise){\arrowlength}}%
\else\ifnum\arrowtype=2
   \put(\xpos,\ypos){\mvector(\run,\rise){\arrowlength}}%
\else\ifnum\arrowtype=3
   \put(\xpos,\ypos){\evector(\run,\rise){\arrowlength}}%
\fi\fi\fi}\fi\fi\fi\fi}}
\def\puthmorphism(#1,#2)[#3`#4`#5]#6#7#8{{%
\xpos #1
\ypos #2
\width #6
\arrowlength #6
\putbox(\xpos,\ypos){#3\vphantom{#4}}%
{\advance \xpos by\arrowlength
\putbox(\xpos,\ypos){\vphantom{#3}#4}}%
\horsize{\tempcounta}{#3}%
\horsize{\tempcountb}{#4}%
\divide \tempcounta by2
\divide \tempcountb by2
\advance \tempcounta by30
\advance \tempcountb by30
\advance \xpos by\tempcounta
\advance \arrowlength by-\tempcounta
\advance \arrowlength by-\tempcountb
\putvector(\xpos,\ypos)(1,0){\arrowlength}{#7}%
\divide \arrowlength by2
\advance \xpos by\arrowlength
\vertsize{\tempcounta}{#5}%
\divide\tempcounta by2
\advance \tempcounta by20
\if a#8 %
   \advance \ypos by\tempcounta
   \putbox(\xpos,\ypos){#5}%
\else
   \advance \ypos by-\tempcounta
   \putbox(\xpos,\ypos){#5}%
\fi}}
\def\putvmorphism(#1,#2)[#3`#4`#5]#6#7#8{{%
\xpos #1
\ypos #2
\arrowlength #6
\arrowtype #7
\settowidth{\xlen}{$#5$}%
\putbox(\xpos,\ypos){#3}%
{\advance \ypos by-\arrowlength
\putbox(\xpos,\ypos){#4}}%
{\advance\arrowlength by-140
\advance \ypos by-70
\ifdim\xlen>0pt
   \if m#8%
      \putsplitvector(\xpos,\ypos){\arrowlength}{\arrowtype}%
   \else
      \putvector(\xpos,\ypos)(0,-1){\arrowlength}{\arrowtype}%
   \fi
\else
   \putvector(\xpos,\ypos)(0,-1){\arrowlength}{\arrowtype}%
\fi}%
\ifdim\xlen>0pt
   \divide \arrowlength by2
   \advance\ypos by-\arrowlength
   \if l#8%
      \advance \xpos by-40
      \put(\xpos,\ypos){\makebox(0,0)[r]{$#5$}}%
   \else\if r#8%
      \advance \xpos by40
      \put(\xpos,\ypos){\makebox(0,0)[l]{$#5$}}%
   \else
      \putbox(\xpos,\ypos){#5}%
   \fi\fi
\fi
}}
\def\topadjust[#1`#2`#3]{%
\yoff=10
\vertadjust[#1`#2`{#3}]%
\advance \yext by\tempcounta
\advance \yext by 10
}
\def\botadjust[#1`#2`#3]{%
\vertadjust[#1`#2`{#3}]%
\advance \yext by\tempcounta
\advance \yoff by-\tempcounta
}
\def\leftadjust[#1`#2`#3]{%
\xoff=0
\horadjust[#1`#2`{#3}]%
\advance \xext by\tempcounta
\advance \xoff by-\tempcounta
}
\def\rightadjust[#1`#2`#3]{%
\horadjust[#1`#2`{#3}]%
\advance \xext by\tempcounta
}
\def\rightsladjust[#1`#2`#3]{%
\sladjust[#1`#2`{#3}]{\width}%
\advance \xext by\tempcounta
}
\def\leftsladjust[#1`#2`#3]{%
\xoff=0
\sladjust[#1`#2`{#3}]{\width}%
\advance \xext by\tempcounta
\advance \xoff by-\tempcounta
}
\def\adjust[#1`#2;#3`#4;#5`#6;#7`#8]{%
\topadjust[#1``{#2}]
\leftadjust[#3``{#4}]
\rightadjust[#5``{#6}]
\botadjust[#7``{#8}]}
\def\putsquarep<#1>(#2)[#3;#4`#5`#6`#7]{{%
\setsqparms[#1]%
\setpos(#2)%
\settokens[#3]%
\puthmorphism(\xpos,\ypos)[\tokenc`\tokend`{#7}]{\width}{\arrowtyped}b%
\advance\ypos by \height
\puthmorphism(\xpos,\ypos)[\tokena`\tokenb`{#4}]{\width}{\arrowtypea}a%
\putvmorphism(\xpos,\ypos)[``{#5}]{\height}{\arrowtypeb}l%
\advance\xpos by \width
\putvmorphism(\xpos,\ypos)[``{#6}]{\height}{\arrowtypec}r%
}}
\def\putsquare{\@ifnextchar <{\putsquarep}{\putsquarep%
   <\arrowtypea`\arrowtypeb`\arrowtypec`\arrowtyped;\width`\height>}}
\def\square{\@ifnextchar< {\squarep}{\squarep
   <\arrowtypea`\arrowtypeb`\arrowtypec`\arrowtyped;\width`\height>}}
\def\squarep<#1>[#2`#3`#4`#5;#6`#7`#8`#9]{{
\setsqparms[#1]
\xext=\width                                          
\yext=\height                                         
\topadjust[#2`#3`{#6}]
\botadjust[#4`#5`{#9}]
\leftadjust[#2`#4`{#7}]
\rightadjust[#3`#5`{#8}]
\begin{picture}(\xext,\yext)(\xoff,\yoff)
\putsquarep<\arrowtypea`\arrowtypeb`\arrowtypec`\arrowtyped;\width`\height>%
(0,0)[#2`#3`#4`#5;#6`#7`#8`{#9}]%
\end{picture}%
}}
\def\putptrianglep<#1>(#2,#3)[#4`#5`#6;#7`#8`#9]{{%
\settriparms[#1]%
\xpos=#2 \ypos=#3
\advance\ypos by \height
\puthmorphism(\xpos,\ypos)[#4`#5`{#7}]{\height}{\arrowtypea}a%
\putvmorphism(\xpos,\ypos)[`#6`{#8}]{\height}{\arrowtypeb}l%
\advance\xpos by\height
\putmorphism(\xpos,\ypos)(-1,-1)[``{#9}]{\height}{\arrowtypec}r%
}}
\def\putptriangle{\@ifnextchar <{\putptrianglep}{\putptrianglep
   <\arrowtypea`\arrowtypeb`\arrowtypec;\height>}}
\def\ptriangle{\@ifnextchar <{\ptrianglep}{\ptrianglep
   <\arrowtypea`\arrowtypeb`\arrowtypec;\height>}}
\def\ptrianglep<#1>[#2`#3`#4;#5`#6`#7]{{
\settriparms[#1]%
\width=\height                         
\xext=\width                           
\yext=\width                           
\topadjust[#2`#3`{#5}]
\botadjust[#3``]
\leftadjust[#2`#4`{#6}]
\rightsladjust[#3`#4`{#7}]
\begin{picture}(\xext,\yext)(\xoff,\yoff)
\putptrianglep<\arrowtypea`\arrowtypeb`\arrowtypec;\height>%
(0,0)[#2`#3`#4;#5`#6`{#7}]%
\end{picture}%
}}
\def\putqtrianglep<#1>(#2,#3)[#4`#5`#6;#7`#8`#9]{{%
\settriparms[#1]%
\xpos=#2 \ypos=#3
\advance\ypos by\height
\puthmorphism(\xpos,\ypos)[#4`#5`{#7}]{\height}{\arrowtypea}a%
\putmorphism(\xpos,\ypos)(1,-1)[``{#8}]{\height}{\arrowtypeb}l%
\advance\xpos by\height
\putvmorphism(\xpos,\ypos)[`#6`{#9}]{\height}{\arrowtypec}r%
}}
\def\putqtriangle{\@ifnextchar <{\putqtrianglep}{\putqtrianglep
   <\arrowtypea`\arrowtypeb`\arrowtypec;\height>}}
\def\qtriangle{\@ifnextchar <{\qtrianglep}{\qtrianglep
   <\arrowtypea`\arrowtypeb`\arrowtypec;\height>}}
\def\qtrianglep<#1>[#2`#3`#4;#5`#6`#7]{{
\settriparms[#1]
\width=\height                         
\xext=\width                           
\yext=\height                          
\topadjust[#2`#3`{#5}]
\botadjust[#4``]
\leftsladjust[#2`#4`{#6}]
\rightadjust[#3`#4`{#7}]
\begin{picture}(\xext,\yext)(\xoff,\yoff)
\putqtrianglep<\arrowtypea`\arrowtypeb`\arrowtypec;\height>%
(0,0)[#2`#3`#4;#5`#6`{#7}]%
\end{picture}%
}}
\def\putdtrianglep<#1>(#2,#3)[#4`#5`#6;#7`#8`#9]{{%
\settriparms[#1]%
\xpos=#2 \ypos=#3
\puthmorphism(\xpos,\ypos)[#5`#6`{#9}]{\height}{\arrowtypec}b%
\advance\xpos by \height \advance\ypos by\height
\putmorphism(\xpos,\ypos)(-1,-1)[``{#7}]{\height}{\arrowtypea}l%
\putvmorphism(\xpos,\ypos)[#4``{#8}]{\height}{\arrowtypeb}r%
}}
\def\putdtriangle{\@ifnextchar <{\putdtrianglep}{\putdtrianglep
   <\arrowtypea`\arrowtypeb`\arrowtypec;\height>}}
\def\dtriangle{\@ifnextchar <{\dtrianglep}{\dtrianglep
   <\arrowtypea`\arrowtypeb`\arrowtypec;\height>}}
\def\dtrianglep<#1>[#2`#3`#4;#5`#6`#7]{{
\settriparms[#1]
\width=\height                         
\xext=\width                           
\yext=\height                          
\topadjust[#2``]
\botadjust[#3`#4`{#7}]
\leftsladjust[#3`#2`{#5}]
\rightadjust[#2`#4`{#6}]
\begin{picture}(\xext,\yext)(\xoff,\yoff)
\putdtrianglep<\arrowtypea`\arrowtypeb`\arrowtypec;\height>%
(0,0)[#2`#3`#4;#5`#6`{#7}]%
\end{picture}%
}}
\def\putbtrianglep<#1>(#2,#3)[#4`#5`#6;#7`#8`#9]{{%
\settriparms[#1]%
\xpos=#2 \ypos=#3
\puthmorphism(\xpos,\ypos)[#5`#6`{#9}]{\height}{\arrowtypec}b%
\advance\ypos by\height
\putmorphism(\xpos,\ypos)(1,-1)[``{#8}]{\height}{\arrowtypeb}r%
\putvmorphism(\xpos,\ypos)[#4``{#7}]{\height}{\arrowtypea}l%
}}
\def\putbtriangle{\@ifnextchar <{\putbtrianglep}{\putbtrianglep
   <\arrowtypea`\arrowtypeb`\arrowtypec;\height>}}
\def\btriangle{\@ifnextchar <{\btrianglep}{\btrianglep
   <\arrowtypea`\arrowtypeb`\arrowtypec;\height>}}
\def\btrianglep<#1>[#2`#3`#4;#5`#6`#7]{{
\settriparms[#1]
\width=\height                         
\xext=\width                           
\yext=\height                          
\topadjust[#2``]
\botadjust[#3`#4`{#7}]
\leftadjust[#2`#3`{#5}]
\rightsladjust[#4`#2`{#6}]
\begin{picture}(\xext,\yext)(\xoff,\yoff)
\putbtrianglep<\arrowtypea`\arrowtypeb`\arrowtypec;\height>%
(0,0)[#2`#3`#4;#5`#6`{#7}]%
\end{picture}%
}}
\def\putAtrianglep<#1>(#2,#3)[#4`#5`#6;#7`#8`#9]{{%
\settriparms[#1]%
\xpos=#2 \ypos=#3
{\multiply \height by2
\puthmorphism(\xpos,\ypos)[#5`#6`{#9}]{\height}{\arrowtypec}b}%
\advance\xpos by\height \advance\ypos by\height
\putmorphism(\xpos,\ypos)(-1,-1)[#4``{#7}]{\height}{\arrowtypea}l%
\putmorphism(\xpos,\ypos)(1,-1)[``{#8}]{\height}{\arrowtypeb}r%
}}
\def\putAtriangle{\@ifnextchar <{\putAtrianglep}{\putAtrianglep
   <\arrowtypea`\arrowtypeb`\arrowtypec;\height>}}
\def\Atriangle{\@ifnextchar <{\Atrianglep}{\Atrianglep
   <\arrowtypea`\arrowtypeb`\arrowtypec;\height>}}
\def\Atrianglep<#1>[#2`#3`#4;#5`#6`#7]{{
\settriparms[#1]
\width=\height                         
\xext=\width                           
\yext=\height                          
\topadjust[#2``]
\botadjust[#3`#4`{#7}]
\multiply \xext by2 
\leftsladjust[#3`#2`{#5}]
\rightsladjust[#4`#2`{#6}]
\begin{picture}(\xext,\yext)(\xoff,\yoff)%
\putAtrianglep<\arrowtypea`\arrowtypeb`\arrowtypec;\height>%
(0,0)[#2`#3`#4;#5`#6`{#7}]%
\end{picture}%
}}
\def\putAtrianglepairp<#1>(#2)[#3;#4`#5`#6`#7`#8]{{
\settripairparms[#1]%
\setpos(#2)%
\settokens[#3]%
\puthmorphism(\xpos,\ypos)[\tokenb`\tokenc`{#7}]{\height}{\arrowtyped}b%
\advance\xpos by\height
\advance\ypos by\height
\putmorphism(\xpos,\ypos)(-1,-1)[\tokena``{#4}]{\height}{\arrowtypea}l%
\putvmorphism(\xpos,\ypos)[``{#5}]{\height}{\arrowtypeb}m%
\putmorphism(\xpos,\ypos)(1,-1)[``{#6}]{\height}{\arrowtypec}r%
}}
\def\putAtrianglepair{\@ifnextchar <{\putAtrianglepairp}{\putAtrianglepairp%
   <\arrowtypea`\arrowtypeb`\arrowtypec`\arrowtyped`\arrowtypee;\height>}}
\def\Atrianglepair{\@ifnextchar <{\Atrianglepairp}{\Atrianglepairp%
   <\arrowtypea`\arrowtypeb`\arrowtypec`\arrowtyped`\arrowtypee;\height>}}
\def\Atrianglepairp<#1>[#2;#3`#4`#5`#6`#7]{{%
\settripairparms[#1]%
\settokens[#2]%
\width=\height
\xext=\width
\yext=\height
\topadjust[\tokena``]%
\vertadjust[\tokenb`\tokenc`{#6}]
\tempcountd=\tempcounta                       
\vertadjust[\tokenc`\tokend`{#7}]
\ifnum\tempcounta<\tempcountd                 
\tempcounta=\tempcountd\fi                    
\advance \yext by\tempcounta                  
\advance \yoff by-\tempcounta                 %
\multiply \xext by2 
\leftsladjust[\tokenb`\tokena`{#3}]
\rightsladjust[\tokend`\tokena`{#5}]%
\begin{picture}(\xext,\yext)(\xoff,\yoff)%
\putAtrianglepairp
<\arrowtypea`\arrowtypeb`\arrowtypec`\arrowtyped`\arrowtypee;\height>%
(0,0)[#2;#3`#4`#5`#6`{#7}]%
\end{picture}%
}}
\def\putVtrianglep<#1>(#2,#3)[#4`#5`#6;#7`#8`#9]{{%
\settriparms[#1]%
\xpos=#2 \ypos=#3
\advance\ypos by\height
{\multiply\height by2
\puthmorphism(\xpos,\ypos)[#4`#5`{#7}]{\height}{\arrowtypea}a}%
\putmorphism(\xpos,\ypos)(1,-1)[`#6`{#8}]{\height}{\arrowtypeb}l%
\advance\xpos by\height
\advance\xpos by\height
\putmorphism(\xpos,\ypos)(-1,-1)[``{#9}]{\height}{\arrowtypec}r%
}}
\def\putVtriangle{\@ifnextchar <{\putVtrianglep}{\putVtrianglep
   <\arrowtypea`\arrowtypeb`\arrowtypec;\height>}}
\def\Vtriangle{\@ifnextchar <{\Vtrianglep}{\Vtrianglep
   <\arrowtypea`\arrowtypeb`\arrowtypec;\height>}}
\def\Vtrianglep<#1>[#2`#3`#4;#5`#6`#7]{{
\settriparms[#1]
\width=\height                         
\xext=\width                           
\yext=\height                          
\topadjust[#2`#3`{#5}]
\botadjust[#4``]
\multiply \xext by2 
\leftsladjust[#2`#3`{#6}]
\rightsladjust[#3`#4`{#7}]
\begin{picture}(\xext,\yext)(\xoff,\yoff)%
\putVtrianglep<\arrowtypea`\arrowtypeb`\arrowtypec;\height>%
(0,0)[#2`#3`#4;#5`#6`{#7}]%
\end{picture}%
}}
\def\putVtrianglepairp<#1>(#2)[#3;#4`#5`#6`#7`#8]{{
\settripairparms[#1]%
\setpos(#2)%
\settokens[#3]%
\advance\ypos by\height
\putmorphism(\xpos,\ypos)(1,-1)[`\tokend`{#6}]{\height}{\arrowtypec}l%
\puthmorphism(\xpos,\ypos)[\tokena`\tokenb`{#4}]{\height}{\arrowtypea}a%
\advance\xpos by\height
\putvmorphism(\xpos,\ypos)[``{#7}]{\height}{\arrowtyped}m%
\advance\xpos by\height
\putmorphism(\xpos,\ypos)(-1,-1)[``{#8}]{\height}{\arrowtypee}r%
}}
\def\putVtrianglepair{\@ifnextchar <{\putVtrianglepairp}{\putVtrianglepairp%
    <\arrowtypea`\arrowtypeb`\arrowtypec`\arrowtyped`\arrowtypee;\height>}}
\def\Vtrianglepair{\@ifnextchar <{\Vtrianglepairp}{\Vtrianglepairp%
    <\arrowtypea`\arrowtypeb`\arrowtypec`\arrowtyped`\arrowtypee;\height>}}
\def\Vtrianglepairp<#1>[#2;#3`#4`#5`#6`#7]{{%
\settripairparms[#1]%
\settokens[#2]
\xext=\height                  
\width=\height                 
\yext=\height                  
\vertadjust[\tokena`\tokenb`{#4}]
\tempcountd=\tempcounta        
\vertadjust[\tokenb`\tokenc`{#5}]
\ifnum\tempcounta<\tempcountd%
\tempcounta=\tempcountd\fi
\advance \yext by\tempcounta
\botadjust[\tokend``]%
\multiply \xext by2
\leftsladjust[\tokena`\tokend`{#6}]%
\rightsladjust[\tokenc`\tokend`{#7}]%
\begin{picture}(\xext,\yext)(\xoff,\yoff)%
\putVtrianglepairp
<\arrowtypea`\arrowtypeb`\arrowtypec`\arrowtyped`\arrowtypee;\height>%
(0,0)[#2;#3`#4`#5`#6`{#7}]%
\end{picture}%
}}
\def\putCtrianglep<#1>(#2,#3)[#4`#5`#6;#7`#8`#9]{{%
\settriparms[#1]%
\xpos=#2 \ypos=#3
\advance\ypos by\height
\putmorphism(\xpos,\ypos)(1,-1)[``{#9}]{\height}{\arrowtypec}l%
\advance\xpos by\height
\advance\ypos by\height
\putmorphism(\xpos,\ypos)(-1,-1)[#4`#5`{#7}]{\height}{\arrowtypea}l%
{\multiply\height by 2
\putvmorphism(\xpos,\ypos)[`#6`{#8}]{\height}{\arrowtypeb}r}%
}}
\def\putCtriangle{\@ifnextchar <{\putCtrianglep}{\putCtrianglep
    <\arrowtypea`\arrowtypeb`\arrowtypec;\height>}}
\def\Ctriangle{\@ifnextchar <{\Ctrianglep}{\Ctrianglep
    <\arrowtypea`\arrowtypeb`\arrowtypec;\height>}}
\def\Ctrianglep<#1>[#2`#3`#4;#5`#6`#7]{{
\settriparms[#1]
\width=\height                          
\xext=\width                            
\yext=\height                           
\multiply \yext by2 
\topadjust[#2``]
\botadjust[#4``]
\sladjust[#3`#2`{#5}]{\width}
\tempcountd=\tempcounta                 
\sladjust[#3`#4`{#7}]{\width}
\ifnum \tempcounta<\tempcountd          
\tempcounta=\tempcountd\fi              
\advance \xext by\tempcounta            
\advance \xoff by-\tempcounta           %
\rightadjust[#2`#4`{#6}]
\begin{picture}(\xext,\yext)(\xoff,\yoff)%
\putCtrianglep<\arrowtypea`\arrowtypeb`\arrowtypec;\height>%
(0,0)[#2`#3`#4;#5`#6`{#7}]%
\end{picture}%
}}
\def\putDtrianglep<#1>(#2,#3)[#4`#5`#6;#7`#8`#9]{{%
\settriparms[#1]%
\xpos=#2 \ypos=#3
\advance\xpos by\height \advance\ypos by\height
\putmorphism(\xpos,\ypos)(-1,-1)[``{#9}]{\height}{\arrowtypec}r%
\advance\xpos by-\height \advance\ypos by\height
\putmorphism(\xpos,\ypos)(1,-1)[`#5`{#8}]{\height}{\arrowtypeb}r%
{\multiply\height by 2
\putvmorphism(\xpos,\ypos)[#4`#6`{#7}]{\height}{\arrowtypea}l}%
}}
\def\putDtriangle{\@ifnextchar <{\putDtrianglep}{\putDtrianglep
    <\arrowtypea`\arrowtypeb`\arrowtypec;\height>}}
\def\Dtriangle{\@ifnextchar <{\Dtrianglep}{\Dtrianglep
   <\arrowtypea`\arrowtypeb`\arrowtypec;\height>}}
\def\Dtrianglep<#1>[#2`#3`#4;#5`#6`#7]{{
\settriparms[#1]
\width=\height                         
\xext=\height                          
\yext=\height                          
\multiply \yext by2 
\topadjust[#2``]
\botadjust[#4``]
\leftadjust[#2`#4`{#5}]
\sladjust[#3`#2`{#5}]{\height}
\tempcountd=\tempcountd                
\sladjust[#3`#4`{#7}]{\height}
\ifnum \tempcounta<\tempcountd         
\tempcounta=\tempcountd\fi             
\advance \xext by\tempcounta           %
\begin{picture}(\xext,\yext)(\xoff,\yoff)
\putDtrianglep<\arrowtypea`\arrowtypeb`\arrowtypec;\height>%
(0,0)[#2`#3`#4;#5`#6`{#7}]%
\end{picture}%
}}
\def\setrecparms[#1`#2]{\width=#1 \height=#2}%
\def\recursep<#1`#2>[#3;#4`#5`#6`#7`#8]{{%
\width=#1 \height=#2
\settokens[#3]
\settowidth{\tempdimen}{$\tokena$}
\ifdim\tempdimen=0pt
  \savebox{\tempboxa}{\hbox{$\tokenb$}}%
  \savebox{\tempboxb}{\hbox{$\tokend$}}%
  \savebox{\tempboxc}{\hbox{$#6$}}%
\else
  \savebox{\tempboxa}{\hbox{$\hbox{$\tokena$}\times\hbox{$\tokenb$}$}}%
  \savebox{\tempboxb}{\hbox{$\hbox{$\tokena$}\times\hbox{$\tokend$}$}}%
  \savebox{\tempboxc}{\hbox{$\hbox{$\tokena$}\times\hbox{$#6$}$}}%
\fi
\ypos=\height
\divide\ypos by 2
\xpos=\ypos
\advance\xpos by \width
\xext=\xpos \yext=\height
\topadjust[#3`\usebox{\tempboxa}`{#4}]%
\botadjust[#5`\usebox{\tempboxb}`{#8}]%
\sladjust[\tokenc`\tokenb`{#5}]{\ypos}%
\tempcountd=\tempcounta
\sladjust[\tokenc`\tokend`{#5}]{\ypos}%
\ifnum \tempcounta<\tempcountd
\tempcounta=\tempcountd\fi
\advance \xext by\tempcounta
\advance \xoff by-\tempcounta
\rightadjust[\usebox{\tempboxa}`\usebox{\tempboxb}`\usebox{\tempboxc}]%
\bfig
\putCtrianglep<-1`1`1;\ypos>(0,0)[`\tokenc`;#5`#6`{#7}]%
\puthmorphism(\ypos,0)[\tokend`\usebox{\tempboxb}`{#8}]{\width}{-1}b%
\puthmorphism(\ypos,\height)[\tokenb`\usebox{\tempboxa}`{#4}]{\width}{-1}a%
\advance\ypos by \width
\putvmorphism(\ypos,\height)[``\usebox{\tempboxc}]{\height}1r%
\efig
}}
\def\recurse{\@ifnextchar <{\recursep}{\recursep<\width`\height>}}
\def\puttwohmorphisms(#1,#2)[#3`#4;#5`#6]#7#8#9{{%
%
\puthmorphism(#1,#2)[#3`#4`]{#7}0a
\ypos=#2
\advance\ypos by 20
\puthmorphism(#1,\ypos)[\phantom{#3}`\phantom{#4}`#5]{#7}{#8}a
\advance\ypos by -40
\puthmorphism(#1,\ypos)[\phantom{#3}`\phantom{#4}`#6]{#7}{#9}b
}}
\def\puttwovmorphisms(#1,#2)[#3`#4;#5`#6]#7#8#9{{%
%
%
%
\putvmorphism(#1,#2)[#3`#4`]{#7}0a
\xpos=#1
\advance\xpos by -20
\putvmorphism(\xpos,#2)[\phantom{#3}`\phantom{#4}`#5]{#7}{#8}l
\advance\xpos by 40
\putvmorphism(\xpos,#2)[\phantom{#3}`\phantom{#4}`#6]{#7}{#9}r
}}
\def\puthcoequalizer(#1)[#2`#3`#4;#5`#6`#7]#8#9{{%
%
\setpos(#1)%
\puttwohmorphisms(\xpos,\ypos)[#2`#3;#5`#6]{#8}11%
\advance\xpos by #8
\puthmorphism(\xpos,\ypos)[\phantom{#3}`#4`#7]{#8}1{#9}
}}
\def\putvcoequalizer(#1)[#2`#3`#4;#5`#6`#7]#8#9{{%
%
%
%
%
\setpos(#1)%
\puttwovmorphisms(\xpos,\ypos)[#2`#3;#5`#6]{#8}11%
\advance\ypos by -#8
\putvmorphism(\xpos,\ypos)[\phantom{#3}`#4`#7]{#8}1{#9}
}}
\def\putthreehmorphisms(#1)[#2`#3;#4`#5`#6]#7(#8)#9{{%
\setpos(#1) \settypes(#8)
\if a#9 %
     \vertsize{\tempcounta}{#5}%
     \vertsize{\tempcountb}{#6}%
     \ifnum \tempcounta<\tempcountb \tempcounta=\tempcountb \fi
\else
     \vertsize{\tempcounta}{#4}%
     \vertsize{\tempcountb}{#5}%
     \ifnum \tempcounta<\tempcountb \tempcounta=\tempcountb \fi
\fi
\advance \tempcounta by 60
\puthmorphism(\xpos,\ypos)[#2`#3`#5]{#7}{\arrowtypeb}{#9}
\advance\ypos by \tempcounta
\puthmorphism(\xpos,\ypos)[\phantom{#2}`\phantom{#3}`#4]{#7}{\arrowtypea}{#9}
\advance\ypos by -\tempcounta \advance\ypos by -\tempcounta
\puthmorphism(\xpos,\ypos)[\phantom{#2}`\phantom{#3}`#6]{#7}{\arrowtypec}{#9}
}}
\def\putarc(#1,#2)[#3`#4`#5]#6#7#8{{%
\xpos #1
\ypos #2
\width #6
\arrowlength #6
\putbox(\xpos,\ypos){#3\vphantom{#4}}%
{\advance \xpos by\arrowlength
\putbox(\xpos,\ypos){\vphantom{#3}#4}}%
\horsize{\tempcounta}{#3}%
\horsize{\tempcountb}{#4}%
\divide \tempcounta by2
\divide \tempcountb by2
\advance \tempcounta by30
\advance \tempcountb by30
\advance \xpos by\tempcounta
\advance \arrowlength by-\tempcounta
\advance \arrowlength by-\tempcountb
\halflength=\arrowlength \divide\halflength by 2
\divide\arrowlength by 5
\put(\xpos,\ypos){\bezier{\arrowlength}(0,0)(50,50)(\halflength,50)}
\ifnum #7=-1 \put(\xpos,\ypos){\vector(-3,-2)0} \fi
\advance\xpos by \halflength
\put(\xpos,\ypos){\xpos=\halflength \advance\xpos by -50
   \bezier{\arrowlength}(0,50)(\xpos,50)(\halflength,0)}
\ifnum #7=1 {\advance \xpos by
   \halflength \put(\xpos,\ypos){\vector(3,-2)0}} \fi
\advance\ypos by 50
\vertsize{\tempcounta}{#5}%
\divide\tempcounta by2
\advance \tempcounta by20
\if a#8 %
   \advance \ypos by\tempcounta
   \putbox(\xpos,\ypos){#5}%
\else
   \advance \ypos by-\tempcounta
   \putbox(\xpos,\ypos){#5}%
\fi
}}
\author[Ghilardi]{Silvio Ghilardi}
\address{Silvio Ghilardi\\
 Dipartimento di Matematica, Universit\`a degli Studi di
  Milano}
\email{silvio.ghilardi@unimi.it}
\author[Santocanale]{Luigi Santocanale}
\address{Luigi Santocanale\\
LIS, CNRS UMR 7020, Aix-Marseille Universit\'e}
\email{luigi.santocanale@lis-lab.fr}
\title[Ruitenburg's Theorem via Duality and Bounded Bisimulations]{Ruitenburg's Theorem \\ via Duality and Bounded Bisimulations}
\begin{document}

\maketitle

\begin{abstract}
  For a given intuitionistic propositional formula $A$ and a
  propositional variable $x$ occurring in it, define the infinite
  sequence of formulae $\set{ A^i}_{i\geq 1}$ by letting $A^1$ be $A$
  and $A^{i+1}$ be $A(A^i/x)$.
  \RT~\cite{Ruitenburg84} says that the sequence
  $\set{ A^i}_{i\geq 1}$ (modulo logical equivalence) is ultimately
  periodic with period 2, i.e.  there is $N\geq 0$ such that
  $A^{N+2}\leftrightarrow A^N$ is provable in intuitionistic
  propositional calculus. We give a semantic proof of this theorem,
  using duality techniques and bounded bisimulations ranks.

  \smallskip
\noindent \emph{Keywords.} 
 \RT, Sheaf Duality, Bounded Bisimulations.

\end{abstract}

\section{Introduction}\label{sec:intro}

Let us call an infinite sequence 
$$
a_1, a_2, \dots, a_i, \dots
$$
\emph{ultimately periodic} iff there are $N$ and $k$ such that for all
$s_1, s_2\geq N$, we have that 
$s_1\equiv s_2 \mod k$
implies
$a_{s_1}= a_{s_2}$.  If $(N, k)$ is the smallest (in the lexicographic
sense) pair for which this happens, we say that $N$ is an \emph{index}
and $k$ a \emph{period} for the ultimately periodic sequence
$\set{a_i}_i$. Thus, for instance, an ultimately periodic sequence
with index $N$ and period 2 looks as follows
$$
a_1, \dots, a_N, a_{N+1}, a_N, a_{N+1}, \dots
$$
A typical example of an ultimately periodic sequence is the sequence
of the iterations $\set{f^i}_i$ of an endo-function $f$ of a finite
set.  Whenever infinitary data are involved, ultimate periodicity comes
often as a surprise.

  \RT is in fact a surprising result stating the following: take a
  formula $A(x, \uy)$ of intuitionistic propositional calculus $(IPC)$
  (by the notation $A(x, \uy)$ we mean that the only propositional
  letters occurring in $A$ are among $x, \uy$ - with $\uy$ being, say,
  the tuple $y_1, \dots, y_n$) and consider the sequence
  $\set{A^i(x,\uy)}_{i\geq 1}$ so defined:
\begin{equation}\label{eq:formulasequence}
A^1:\equiv A, ~~\dots,~~ A^{i+1}:\equiv A(A^i/x, \uy)
\end{equation}
where the
  slash means substitution; then,
  \emph{taking equivalence
  classes under provable bi-implication in $(IPC)$, the sequence
  $\set{[A^i(x,\uy)]}_{i\geq 1}$ is ultimately periodic with period
  2}.
  The latter means that there is $N$ such that
\begin{equation}\label{eq:Ruitenburgtheorem}
\vdash_{IPC} A^{N+2}\leftrightarrow  A^N~~~.
\end{equation}

An interesting consequence of this result is that \emph {least (and
  greatest) fixpoints of monotonic formulae are definable in
  $(IPC)$}~\cite{Mardaev1993,Mardaev07,fossacs}: this is because the
sequence~\eqref{eq:formulasequence} becomes increasing when evaluated
on $\bot/x$ (if $A$ is monotonic in $x$), so that the period is
decreased to 1.  Thus the index of the sequence becomes a finite
upper bound for the fixpoints approximation
convergence.

\RT was shown in~\cite{Ruitenburg84} via a, rather involved, purely syntactic proof. The proof has been recently formalized inside the proof assistant \textsc{coq} by T. Litak 
(see \url{https://git8.cs.fau.de/redmine/projects/ruitenburg1984}).
In this paper we supply a semantic proof, using duality 
and bounded bisimulation machinery. 

\emph{Bounded bisimulations}  are a standard tool in non classical logics~\cite{fine} which is used in order to characterize satisfiability of bounded depth formulae
and hence definable classes of models: examples of the use of bounded bisimulations include for instance~\cite{shavrukov},~\cite{GhilardiZawadowski2011},~\cite{visser},~\cite{um}. 

\emph{Duality} has a long tradition in algebraic logic (see e.g.~\cite{Esa74} for the Heyting algebras case): 
many phenomena look more transparent whenever they are analyzed in the
dual categories, especially whenever dualities can convert coproducts
and colimits constructions into more familiar `honest' products and
limits constructions.  The duality we use here is taken
from~\cite{GhilardiZawadowski2011} and has a mixed
geometric/combinatorial nature. In fact, the geometric environment
shows \emph{how to find} relevant mathematical structures (products,
equalizers, images,...) using their standard definitions in sheaves
and presheaves; on the other hand, the combinatorial aspects show that
such constructions \emph{are definable}, thus meaningful from the
logical side. In this sense, notice that we work with finitely
presented algebras, and our combinatoric ingredients
(Ehrenfeucht-Fraiss\'e games, etc.) replace the
topological ingredients which are common in the
algebraic logic literature (working with arbitrary algebras instead).

The paper is organized as follows. In Section~\ref{sec:classical} we show how to formulate \RT in algebraic terms and how to prove it via duality in the easy case of classical logic (where index is always 1). This Section supplies the methodology we shall follow in the whole paper. After introducing  
the required duality ingredients for finitely presented Heyting algebras  (this is done in Section~\ref{sec:duality} - the material of this Section is taken from~\cite{GhilardiZawadowski2011}), we show how to extend the basic argument of Section~\ref{sec:classical} to finite Kripke models in Section~\ref{sec:finite}.
This extension does not directly give \RT, because it supplies a bound for the indexes of our sequences 
which is dependent on the poset a given model is based on. This bound is  made uniform in Section~\ref{sec:main} (using the ranks machinery introduced in Section~\ref{sec:ranks}), thus finally reaching our goal.

\section{The Case of Classical Logic}\label{sec:classical}

We explain our methodology in the much easier case of classical logic. In classical propositional calculus ($CPC$), 
\RT holds with index 1 and period 2, namely
given a formula $A(x,\uy)$, we need to prove that
\begin{equation}
  \label{eq:class}
 \vdash_{CPC} A^3 \leftrightarrow A 
\end{equation}
holds (here $A^3$ is defined like in~\eqref{eq:formulasequence}). 
\subsection{The algebraic reformulation}

First, we transform the above statement \eqref{eq:class}
into an algebraic statement concerning
free Boolean algebras. We let $\cF_B(\uz)$ be the free Boolean algebra
over the finite set $\uz$.  Recall that $\cF_B(\uz)$ is the
Lindenbaum-Tarski algebra of classical propositional calculus
restricted to a language having just the $\uz$ as propositional
variables.

Similarly, morphisms $\mu: \cF_B(x_1, \dots, x_n)\lora \cF_B(\uz)$
bijectively correspond to $n$-tuples of equivalence classes of
formulae $A_1(\uz), \dots, A_n(\uz)$ in $\cF_B(\uz)$: the map $\mu$
corresponding to the tuple $A_1(\uz), \dots, A_n(\uz)$ associates with
the equivalence class of $B(x_1, \dots, x_n)$ in
$\cF_B(x_1, \dots, x_n)$ the equivalence class of
$B(A_1/x_1, \dots, A_n/x_n)$ in $\cF_B(\uz)$.

Composition is substitution, in the sense that 
if  $\mu: \cF_B(x_1, \dots, x_n)\lora \cF_B(\uz)$ is induced, as above, by $A_1(\uz), \dots, A_n(\uz)$ and if $\nu:\cF_B(y_1, \dots, y_m)
\lora \cF_B(x_1, \dots, x_n)$ is induced by $C_1(x_1, \dots, x_n), \dots, C_m(x_1, \dots, x_n)$, then the composite map 
$\mu\circ \nu: \cF_B(y_1, \dots, y_m)\lora \cF_B(\uz)$ is induced by the $m$-tuple $C_1(A_1/x_1, \dots, A_n/x_n), \dots, C_m(A_1/x_1, \dots, A_n/x_n)$.

How to translate the statement~\eqref{eq:class} in this setting? Let
$\uy$ be $y_1, \dots, y_n$; we can consider the map
$\mu_A:\cF_B(x,y_1, \dots, y_n)\lora \cF_B(x,y_1, \dots, y_n)$ induced
by the $n+1$-tuple of formulae $A, y_1, \dots, y_n$; then, taking in
mind that in Lindenbaum algebras identity is modulo provable
equivalence, the statement~\eqref{eq:class} is equivalent to
\begin{equation}\label{eq:class1}
 \mu_A^3 = \mu_A~~. 
\end{equation}
This raises the question: which endomorphisms of $\cF_B(x,\uy)$ are of the kind $\mu_A$ for some $A(x, \uy)$? The answer is simple: consider the `inclusion'
map $\iota$ of  $\cF_B(\uy)$ into $\cF_B(x,\uy)$ (this is the map induced by the $n$-tuple $y_1, \dots, y_n$): the maps 
$\mu:\cF_B(x,\uy)\lora\cF_B(x,\uy) $
that are of the kind $\mu_A$ are precisely the maps $\mu$ such that $\mu\circ \iota= \iota$, i.e. those for which the triangle
\begin{center}
\resetparms
\settriparms[1`1`1;400] \Atriangle[\cF_B(\uy)`\cF_B(x,\uy) 
`\cF_B(x,\uy) ;\iota`\iota`\mu ]
\end{center}
\noindent
commutes. 

It is worth making a little step further: since the free algebra
functor preserves coproducts, we have that $\cF_B(x,\uy)$ is the
coproduct of $\cF_B(\uy)$ with $\cF_B(x)$ - the latter being the free
algebra on one generator. In general, let us denote by $\cA[x]$ the
coproduct of the Boolean algebra $\cA$ with the free algebra on one
generator (let us call $\cA[x]$ the \emph{algebra of polynomials} over
$\cA$).

A  slight generalization of statement~\eqref{eq:class1} now reads as follows:

\begin{itemize}
\item let $\cA$ be a finitely presented Boolean
  algebra\footnote{Recall that an algebra is finitely presented iff it
    is isomorphic to the quotient of a finitely generated free algebra
    by a finitely generated congruence. In the case of Boolean algebra
    `finitely presented' is the same as `finite', but it is not
    anymore like that in the case of Heyting algebras.} and let the
  map $\mu: \cA[x]\lora \cA[x]$ commute with the coproduct injection
  $\iota: \cA \lora \cA[x]$
\begin{center}
\resetparms
\settriparms[1`1`1;400] \Atriangle[\cA`\cA[x] 
`\cA[x] ;\iota`\iota`\mu ]
\end{center}
\noindent
Then we have 
\begin{equation}\label{eq:class2}
\mu^3=\mu~~. 
\end{equation}
\end{itemize}

\subsection{Duality}

The gain we achieved with statement~\eqref{eq:class2} is that the latter is a purely categorical statement, so that we can re-interpret it in dual categories.
In fact, a good duality may turn coproducts into products and make our statement easier - if not trivial at all.

Finitely presented Boolean algebras are dual to finite sets; the duality functor maps coproducts into products and the free Boolean algebra on one generator 
to the two-elements set ${\bf 2}=\set{0,1}$ (which, by chance is also a subobject classifier for finite sets). Thus statement~\eqref{eq:class2} now becomes
\begin{itemize}
\item let $T$ be a finite set and let the function
  $f: T\times {\bf{2}}\lora T\times {\bf{2}}$ commute with the product
  projection $\pi_0: T\times {\bf{2}} \lora T$
  \begin{center}
    \resetparms \settriparms[1`1`1;400] \Vtriangle[ T\times {\bf{2}}`
    T\times {\bf{2}} `T ;f`\pi_0`\pi_0 ]
\end{center}
\noindent
Then we have 
\begin{equation}\label{eq:class3}
f^3=f~~. 
\end{equation}
\end{itemize}

In this final form, statement~\eqref{eq:class3} is now just a trivial exercise, which is solved as follows. Notice first that $f$ can be decomposed as 
$\langle \pi_0, \chi_S\rangle$ (incidentally, $\chi_S$ is the characteristic function of some $S\subseteq T\times \bf{2}$).
Now, if $f(a,b)=(a,b)$ we trivially have also $f^3(a,b)=f(a,b)$; suppose then
 $f(a,b)=(a, b')\neq (a,b)$.
 If $f(a,b')=(a,b')$, then $f^3(a,b)=f(a,b)=(a,b')$, otherwise
 $f(a,b')=(a,b)$ (there are only two available values for $b$!) and
 even in this case $f^3(a,b)=f(a,b)$.

 Let us illustrate theses cases by thinking of the action of $f$ on
 $A\times {\bf 2}$ as one-letter deterministic automaton:
 \begin{center}
   \begin{tikzcd}
     (a,b)\ar[loop above]{}{} &
     (a,b) \ar[r, bend left]{}{} & (a,b') \ar[l,, bend left]{}{}&
     (a,b) \ar[r]{}{} & (a,b') \ar[loop above]{}{}
  \end{tikzcd}
 \end{center}
 This means that on each irreducible component of the action the pairs
 index/period are among $(0,1)$, $(0,2)$, $(1,1)$. Out of these pairs
 we can compute the global index/period of $f$ by means of a
 $\max/\lcm$ formula:
 $(1,2) = (\max\set{0,0,1},\lcm\set{1,2})$.

\section{Duality for Heyting Algebras}
\label{sec:duality}

In this Section we supply definitions, notation and statements
from~\cite{GhilardiZawadowski2011} concerning duality for \fp Heyting
algebras.  Proofs of the facts stated in this section can all be found
in~\cite[Chapter
  4]{GhilardiZawadowski2011}.

  A partially ordered set (poset, for short) is a set endowed with a
  reflexive, transitive, antisymmetric relation (to be always denoted
  with $\leq$). 
  A poset $P$ is rooted if it has a greatest element, that we shall
  denote by $\Rho{P}$. 
If a finite poset $L$ is fixed, we call an $L$-{\it evaluation} or
simply an {\it evaluation} a pair $\langle P, u\rangle$, where $P$ is
a rooted finite poset and $u:P\ra L$ is an order-preserving map.
 
Evaluations \emph{restrictions} are introduced as follows.
If $\langle P, u\rangle$ is an $L$-evaluation and if $p \in P$,
  then we shall denote by $u_{p}$ the  $L$-evaluation,
  $\langle \downset p, u \circ i\rangle$, where
  $\downset p = \set{p ' \in P \mid p' \leq p}$ and
  $i : \,\downset p \subseteq P$ is the inclusion map; briefly,
  $u_{p}$ is the restriction of $u$ to the downset generated by $p$.

Evaluations have a
strict relationship with finite Kripke models: we show in detail  
the connection. 
If
$\langle L, \leq\rangle$ is $\langle {\cal P}(\ux), \supseteq\rangle$
(where $\ux=x_1, \dots, x_n$ is a finite list of propositional letters), then an
$L$-evaluation $u:P\ra L$ is called  a \emph{Kripke model} for the
propositional intuitionistic language built up from $\ux$.\footnote{
  According to our conventions, we have that (for $p, q\in P$) if
  $p\leq q$ then $u(p)\supseteq u(q)$, that is we use $\leq$ where
  standard literature uses $\geq$.}  
  Given such a Kripke model $u$ and an IPC formula $A(\ux)$, the \emph{forcing} relation 
  $u\models A$ is inductively defined as follows:
$$
\begin{aligned}
 &u\models x_i~ &{\rm iff}~ &x_i\in u(\Rho{P})
  \\
 &u\not \models \bot &&
 \\
 &u\models A_1\wedge A_2~&{\rm iff}~&(u\models A_1~{\rm and}~u\models A_2)
 \\
 &u\models A_1\vee A_2~&{\rm iff}~&(u\models A_1~{\rm or}~u\models A_2)
 \\
 &u\models A_1\to A_2~&{\rm iff}~&\forall q\leq \Rho{P}~(u_q\models A_1~{\Rightarrow}~u_q\models A_2)~~.
\end{aligned}
$$

We define for every $n\in \omega$ and for every pair of
$L$-evaluations $u$ and $v$, the notions of being {\it $n$-equivalent}
(written $u\sim_n v$). 
We also define, for two
  $L$-evaluations $u, v$, the notions of being {\it infinitely
    equivalent} (written $u\sim_{\infty}v$).

Let $u:P\ra L$ and $v:Q\ra L$ be two $L$-evaluations. 
The {\it game} we are interested in has two
  players, Player 1 and Player 2.  
Player 1 can choose either a point in $P$ or a point in $Q$ and Player
2 must answer by choosing a point in the other poset; the only rule of the game is that, if
$\langle p\in P, q\in Q\rangle$ is the last move played so far, then
in the successive move the two players can only choose points
$\langle p', q'\rangle$ such that $p'\leq p$ and $q'\leq q$. If
$\langle p_1, q_1\rangle, \dots, \langle p_i, q_i\rangle, \dots$ are
the points chosen in the game, Player 2 wins iff for every
$i=1, 2, \dots$, we have that $u(p_i)=v(q_i)$. We say that
\begin{itemize}
\item[-]
$u\sim_{\infty} v$ iff {\it Player 2 has a winning strategy} in the above game with infinitely many moves;
\item[-]
$u\sim_n v$ (for $n>0$) iff {\it Player 2 has a winning strategy}
in the above game with $n$ moves, i.e. he has a winning strategy provided
we stipulate that the game terminates after $n$ moves;
\item[-] $u\sim_0 v$ iff $u(\Rho{P})=v(\Rho{Q})$
  (recall that $\Rho{P}, \Rho{Q}$ denote the roots
  of $P, Q$).
\end{itemize}

Notice that $u\sim_n v$ always implies $u\sim_0 v$, by the fact that
$L$-evaluations are order-preserving. 
We shall use the notation $[v]_{n}$ for the equivalence class of
  an $L$-valuation $v$ via the equivalence relation $\sim_{n}$.

 The following Proposition states a basic fact (keeping the above
definition for $\sim_0$ as base case for recursion, the Proposition
also supplies an alternative recursive
definition for $\sim_n$):
 
\begin{proposition}\label{p41.1}  
 Given two $L$-evaluations
$u:P\ra L, v:Q\ra L$, and $n>0$, we have that
$u\sim_{n+1}v$ iff $\forall p\in P\;\exists q\in Q ~(u_p\sim_n
v_q)$  and vice versa.
\end{proposition}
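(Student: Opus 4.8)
The plan is to unfold both sides of the claimed equivalence into statements about winning strategies in the Ehrenfeucht--Fraïss\'e game of the text, and to observe that fixing the very first pair of moves decomposes the $(n{+}1)$-move game on $\langle u,v\rangle$ into an $n$-move game on a pair of restrictions. First I would record two preliminary observations. (i) For $p\in P$ the downset $\downset p$ is again a rooted finite poset, with root $p$, so $u_p$ is a legitimate $L$-evaluation and $u_p(\Rho{\downset p})=u(p)$; similarly for $v_q$. (ii) By the remark already made in the text, $u_p\sim_n v_q$ (for any $n\geq 0$) forces $u_p\sim_0 v_q$, i.e. $u(p)=v(q)$; this uniformly handles the base case $n=0$, where $\sim_0$ is defined by agreement at the roots rather than through the game.

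For the direction from left to right I would assume Player 2 has a winning strategy $\sigma$ in the $(n{+}1)$-move game on $\langle u,v\rangle$. Given $p\in P$, let Player 1 open with $p_1:=p$ (legal, since $p\leq\Rho{P}$) and let $q:=\sigma(p)$ be Player 2's reply; then $u(p)=v(q)$, and the plays that continue through $(p,q)$ are exactly the plays of the $n$-move game on $\langle u_p,v_q\rangle$, since the rule $p'\leq p$, $q'\leq q$ is precisely what passing to $\downset p$ and $\downset q$ enforces. As $\sigma$ wins the full game, its behaviour on these continuations is a winning strategy for the $n$-move game on $\langle u_p,v_q\rangle$ --- here one uses that the winning condition is a conjunction over \emph{all} moves, so discarding the first move and reindexing preserves it --- whence $u_p\sim_n v_q$ (for $n=0$ we only need $u(p)=v(q)$, already obtained). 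The symmetric argument, with Player 1 opening in $Q$, gives the ``vice versa'' half.

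For the converse I would build a winning strategy for Player 2 in the $(n{+}1)$-move game on $\langle u,v\rangle$ out of the data on the right-hand side: a choice function $p\mapsto q(p)$ with $u_p\sim_n v_{q(p)}$ and a choice function $q\mapsto p(q)$ with $u_{p(q)}\sim_n v_q$. Player 2 answers an opening move $p\in P$ by $q(p)$ and an opening move $q\in Q$ by $p(q)$; in either case the two labels agree by observation (ii). After this first move the play sits over a pair $(p_1,q_1)$ with $u_{p_1}\sim_n v_{q_1}$, and the remaining $n$-move game is exactly the game on $\langle u_{p_1},v_{q_1}\rangle$, in which Player 2 has a winning strategy by hypothesis; concatenating the first reply with that strategy yields a winning strategy for the whole $(n{+}1)$-move game, so $u\sim_{n+1} v$.

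I expect the only real care to be bookkeeping rather than mathematics: making precise the identification of ``the subgame below the first move $(p,q)$'' with ``the game on $\langle u_p,v_q\rangle$'', and treating the degenerate case $n=0$ (where $\sim_0$ is not literally a game) consistently --- both are dispatched by observations (i) and (ii). Everything else is the standard back-and-forth shuffling of strategies.
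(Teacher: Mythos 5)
Your argument is correct: the paper itself gives no proof of this proposition (it defers all facts of Section~\ref{sec:duality} to \cite{GhilardiZawadowski2011}), and your decomposition of the $(n{+}1)$-move game after the first pair of moves into the $n$-move game on the restrictions $\langle u_p, v_q\rangle$ is exactly the standard argument the cited source uses. The two observations you isolate --- that $\downset p$ is rooted at $p$ with $u_p(p)=u(p)$, and that $\sim_n$ implies $\sim_0$ (noted explicitly in the text) --- are precisely the bookkeeping needed, so nothing is missing.
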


It can be shown that in case $L={\cal P}(x_1, \dots, x_n)$ (i.e. when
$L$-evaluations are just ordinary finite Kripke models over the
language built up from the propositional variables $x_1, \dots, x_n$),
two evaluations are $\sim_{\infty}$-equivalent (resp.
$\sim_n$-equivalent) iff they force 
the same formulas (resp. the same formulas up to implicational degree
$n$). This can be explained in a formal way as follows. For an IPC formula $A(\ux)$, define the implicational degree $d(A)$ as follows:ù
\begin{description}
 \item[{\rm (i)}] $d(\bot)=d(x_i)=0$, for $x_i\in \ux$;
 \item[{\rm (ii)}] $d(A_1*A_2)= max[d(A_1), d(A_2)]$, for $*=\wedge, \vee$;
 \item[{\rm (iii)}] $d(A_1\to A_2)= max[d(A_1), d(A_2)]+1$.
\end{description}
Then one can prove~\cite{visser} that: (1) $u\sim_\infty v$ holds precisely iff ($u\models A \Leftrightarrow v\models A$) holds for all formulae $A(\ux)$;
(2) for all $n$, $u\sim_n v$ holds precisely iff ($u\models A \Leftrightarrow v\models A$) holds for all formulae $A(\ux)$ with $d(A)\leq n$.\footnote{For (1) to be true, it is essential our evaluations to be defined over \emph{finite} posets.}

The above discussion motivates a sort of  identification of formulae with sets of evaluations closed under restrictions and under $\sim_n$ for some $n$. Thus,
\emph{bounded bisimulations} (this is the way the relations $\sim_n$
are sometimes called) supply the combinatorial ingredients for our
duality; for the picture to be complete, however, we also need a geometric environment, which we 
introduce using presheaves.

A map among posets is said to be {\it open}\footnote{Open surjective
  maps are called p-morphisms in the standard non classical logics
  terminology.}  iff it is open in the topological sense (posets can
be viewed as topological spaces whose open subsets are the downward
closed subsets); thus $f : Q\lora P$ is open iff it
is order-preserving and moreover satisfies the following condition
forall $q\in Q, p\in P$
$$
 p\leq f(q) ~\Rightarrow~ \exists q'\in Q~(q'\leq q ~\&~f(q')=p)~~. 
$$
Let $\Pzero$ be the category of finite rooted posets and open maps
between them; 
  a \emph{presheaf} over $\Pzero$ is a contravariant
  functor from $\Pzero$ to the category of sets and function, that is,
  a functor $H:\Pzero^{op}\lora \bf Set$.
Let us recall what this means: a functor $H:\Pzero^{op}\lora \bf Set$
associates to each finite rooted poset $P$ a set $\FH{P}$; if
$f : Q \lora P$ is an open map, then we are also given a function
$\FH{f} : \FH{P} \lora \FH{Q}$; moreover, identities are sent to
identities, while composition is reversed,
$H(g \circ f) = H(f) \circ H(g)$.

Our presheaves form a category 
whose objects are prersheaves over $\Pzero$ and whose maps are
natural transformations; recall that a natural transformation
$\psi: H\lora H'$ is a collections of maps
$\psi_P:\FH{P}\lora \FH[H']{P}$ (indexed by the objects of $\Pzero$)
such that for every map $f:Q\lora P$ in $\Pzero$, we have
$\FH[H']{f}\circ \psi_P = \psi_Q\circ
\FH{f}$. Throughout the paper, we shall usually
omit the subscript $P$ when referring to the $P$-component $\psi_P$ of
a natural transformation $\psi$.

The basic example of presheaf we need in the paper is described as follows.
 Let $L$ be a finite poset and let $h_L$ be the contravariant functor so defined:
 \begin{itemize}
  \item for a finite poset $P$, $h_L(P)$ is the set of all $L$-evaluations;
  \item for an open map $f:Q\lora P$, $h_L(f)$ takes $v: P\lora L$ to
    $v\circ f: Q\lora L$.
 \end{itemize}
The presheaf $h_L$ is actually a sheaf (for the canonical Grothendieck topology over $\Pzero$); we won't need this fact,\footnote{ 
The sheaf structure becomes essential for instance when one has to compute images - images are the categorical counterparts of second order quantifiers, 
see~\cite{GhilardiZawadowski2011}.
}
but 
we nevertheless call $h_L$ the \emph{sheaf of $L$-evaluations} (presheaves of the kind $h_L$, for some $L$, are called \emph{evaluation sheaves}).

Notice the following fact: if $\psi: h_L \lora h_{L'}$ is a natural
transformation, $v\in h_L(P)$ and $p\in P$, then
$\psi(v_p)= (\psi(v))_p$ (this is due to the fact that the inclusion
$\downarrow p \subseteq P$ is an open map, hence an arrow in
$\Pzero$); thus, we shall feel free to use the (non-ambiguous)
notation $\psi(v)_p$ to denote $\psi(v_p)= (\psi(v))_p$.

The notion of \emph{bounded bisimulation index} (\emph{b-index}, for short)\footnote{ This is called 'index' tout court in~\cite{GhilardiZawadowski2011};
here we used the word `index' for a different notion, since Section~\ref{sec:intro}.
}  takes together structural and combinatorial aspects. 
We say that a natural transformation $\psi: h_L \lora h_{L'}$ \emph{has b-index $n$} iff for every $v:P\lora L$ and $ v': P'\lora L$, we have that 
$v\sim_n v'$ implies $\psi(v)\sim_0 \psi(v')$.

The following Proposition lists basic facts about b-indexes (in particular, it ensures that natural transformations having a b-index do compose):

\begin{proposition}
  \label{prop:indexDecreases}
  Let $\psi: h_L \lora h_{L'}$ have b-index $n$; then it has also
  b-index $m$ for every $m\geq n$. Moreover, for every $k\geq 0$, for
  every $v:P\lora L$ and $ v': P'\lora L$, we have that
  $v\sim_{n+k} v'$ implies $\psi(v)\sim_k \psi(v')$.
\end{proposition}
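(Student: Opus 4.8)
The plan is to prove Proposition~\ref{prop:indexDecreases} in two stages. For the first claim, suppose $\psi$ has b-index $n$ and fix $m\geq n$; given $v\sim_m v'$, Proposition~\ref{p41.1} (or rather the monotonicity of the relations $\sim_k$ in $k$, which follows from it together with the base case $\sim_0$) yields $v\sim_n v'$, hence $\psi(v)\sim_0\psi(v')$, so $\psi$ has b-index $m$. The only thing that needs checking here is the monotonicity fact $u\sim_{k+1}v\Rightarrow u\sim_k v$; this is proved by an easy induction on $k$ using Proposition~\ref{p41.1}, the base case being the remark (already made in the text) that $u\sim_1 v$ implies $u\sim_0 v$ since evaluations are order-preserving.

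For the second, more substantial claim, I would fix $v:P\lora L$ and $v':P'\lora L$ and proceed by induction on $k$. The base case $k=0$ is exactly the definition of having b-index $n$. For the inductive step, assume the statement for $k$ and suppose $v\sim_{n+k+1}v'$; we must show $\psi(v)\sim_{k+1}\psi(v')$. By Proposition~\ref{p41.1} applied to $\sim_{k+1}$ on the $\psi$-side, it suffices to show that for every $p\in P$ there is $q\in P'$ with $\psi(v)_p\sim_k\psi(v')_q$, and symmetrically. So fix $p\in P$. Since $v\sim_{n+k+1}v'$, Proposition~\ref{p41.1} gives some $q\in P'$ with $v_p\sim_{n+k}v'_q$. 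Now apply the inductive hypothesis: here I would use that $\psi$, being a natural transformation $h_L\lora h_{L'}$ with b-index $n$, restricts well, i.e.\ $\psi(v)_p=\psi(v_p)$ and $\psi(v')_q=\psi(v'_q)$ (this is the restriction identity $\psi(w_p)=(\psi(w))_p$ noted in the excerpt just before the statement). The inductive hypothesis, applied to the pair $v_p:\,\downset p\lora L$ and $v'_q:\,\downset q\lora L$, then gives $\psi(v_p)\sim_k\psi(v'_q)$, i.e.\ $\psi(v)_p\sim_k\psi(v')_q$, as required. The symmetric condition is obtained the same way swapping the roles of $P$ and $P'$.

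The main obstacle, such as it is, is bookkeeping rather than conceptual: one must be careful that the inductive hypothesis is genuinely available for the restricted evaluations $v_p$ and $v'_q$. This is where naturality of $\psi$ is essential — it is precisely the restriction identity $\psi(w)_p=\psi(w_p)$, valid because $\downset p\subseteq P$ is an open map and hence an arrow of $\Pzero$, that lets us transport the b-index hypothesis on $\psi$ down to the downsets and apply the induction. Once that identity is in hand, the argument is a routine simultaneous induction on $k$ using Proposition~\ref{p41.1} on both sides. No estimate is lost or gained along the way: each move of Player~1 on the $P$-side consumes exactly one unit from the budget $n+k+1$, matching the one unit consumed on the $\psi$-side, and the residual budget $n+k$ is exactly what the inductive hypothesis needs.
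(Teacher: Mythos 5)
Your proof is correct and is the standard argument (the paper itself states Proposition~\ref{prop:indexDecreases} without proof, deferring to~\cite{GhilardiZawadowski2011}): downward monotonicity of $\sim_m$ in $m$ gives the first claim, and the second follows by induction on $k$ using Proposition~\ref{p41.1} on both sides together with the restriction identity $\psi(w)_p=\psi(w_p)$, which is exactly where naturality of $\psi$ enters. Two pedantic points only: the induction must carry the universal quantifier over all pairs of evaluations so that the hypothesis applies to the restrictions $v_p$, $v'_q$ (you say you ``fix $v,v'$'' but then correctly use the quantified form), and the step producing $\psi(v)\sim_{k+1}\psi(v')$ for $k=0$ needs the $n=0$ instance of the recursive characterization in Proposition~\ref{p41.1}, which is stated there only for $n>0$ but holds by the same one-move game argument.
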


We are now ready to state duality theorems. As it is evident from the
discussion in Section~\ref{sec:classical}, it is sufficient to state a
duality for the category of finitely generated free Heyting algebras;
although it would not be difficult to give a duality for finitely
presented Heyting algebras, we just state a duality for the
intermediate category of Heyting algebras freely generated by a finite
bounded distributive lattice (this is quite simple to state and is
sufficient for proving \RT).

 \begin{theorem}\label{thm:duality}
   The category of Heyting algebras freely generated by a finite
   bounded distributive lattice is dual to the subcategory of
   presheaves over $\Pzero$ having as objects the evaluations sheaves
   and as arrows the natural transformations having a b-index.
 \end{theorem}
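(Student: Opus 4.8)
The plan is to construct a contravariant functor $\mathcal{D}$ in the spirit of the duality used in Section~\ref{sec:classical}, and to check that it is a (contravariant) equivalence onto the stated subcategory. Write $\mathbf{F}(D)$ for the Heyting algebra freely generated by a finite bounded distributive lattice $D$, and for a finite rooted poset $P$ let $\mathcal{O}(P)$ be the Heyting algebra of downward-closed subsets of $P$. Define $\mathcal{D}(H)(P):=\mathrm{Hom}(H,\mathcal{O}(P))$ (Heyting-algebra homomorphisms), the action on an open map $f:Q\lora P$ being postcomposition with $U\mapsto f^{-1}(U):\mathcal{O}(P)\lora\mathcal{O}(Q)$; this latter map is a \emph{Heyting} homomorphism precisely because $f$ is open, which is exactly what makes $\mathcal{D}(H)$ a presheaf over $\Pzero$. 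On an arrow $\mu:H\lora H'$, let $\mathcal{D}(\mu)$ be precomposition with $\mu$, so $\mathcal{D}(\mu):\mathcal{D}(H')\lora\mathcal{D}(H)$.

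First I would identify $\mathcal{D}(\mathbf{F}(D))$ with an evaluation sheaf. By the universal property of $\mathbf{F}(D)$, the set $\mathcal{D}(\mathbf{F}(D))(P)$ is in natural bijection with the bounded-lattice homomorphisms $D\lora\mathcal{O}(P)$, and by the finite Birkhoff duality between finite bounded distributive lattices and finite posets this is naturally (along open maps) the set of order-preserving maps $P\lora J(D)$, where $J(D)$ is the poset of join-irreducible elements of $D$; hence $\mathcal{D}(\mathbf{F}(D))\cong h_{J(D)}$. Taking $D$ to be the free bounded distributive lattice on $\ux$ recovers the sheaf of Kripke models over $\ux$ of Section~\ref{sec:duality}, so $\mathcal{D}$ is the expected functor. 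Conversely, for any finite poset $L$ one has $J(\mathcal{O}(L))\cong L$, so $h_L\cong\mathcal{D}(\mathbf{F}(\mathcal{O}(L)))$: this gives essential surjectivity onto the evaluation sheaves.

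The core of the proof is that for $H=\mathbf{F}(D)$, $H'=\mathbf{F}(D')$, precomposition yields a \emph{bijection} between $\mathrm{Hom}(H,H')$ and the b-index natural transformations $h_{J(D')}\lora h_{J(D)}$. The easy half: a Heyting homomorphism $\mu:\mathbf{F}(D)\lora\mathbf{F}(D')$ is determined by its restriction $D\lora\mathbf{F}(D')$, hence — and here the \emph{finiteness} of $D$ is essential — by a finite family of elements of $\mathbf{F}(D')$, each represented by a formula of some finite implicational degree; with $n$ the maximum of these degrees, the characterization~(2) of $\sim_n$ in terms of forcing of formulae of degree $\le n$ shows that $\mathcal{D}(\mu)$ has b-index $n$, and that these transformations compose is exactly the composition property recorded after Proposition~\ref{prop:indexDecreases}. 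The hard half is \emph{fullness}: every b-index natural transformation $\psi:h_{J(D')}\lora h_{J(D)}$ must be shown to come from a substitution of formulae. Here one uses the identification (following Proposition~\ref{p41.1}) of formulae of implicational degree $\le k$ with sets of evaluations closed under restriction and under $\sim_k$: pulling back along $\psi$ the subpresheaves of $h_{J(D)}$ that represent the generators $d\in D$ gives, by Proposition~\ref{prop:indexDecreases}, subpresheaves of $h_{J(D')}$ that are restriction-closed and $\sim_k$-invariant for a suitable $k$, hence elements of $\mathbf{F}(D')$; one then checks that $d\mapsto(\text{that element})$ is a bounded-lattice homomorphism $D\lora\mathbf{F}(D')$ and that the resulting $\mu$ satisfies $\mathcal{D}(\mu)=\psi$. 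Faithfulness (distinct $\mu$ give distinct $\mathcal{D}(\mu)$) comes down to $\mathbf{F}(D')$ being separated by its finite Heyting-algebra quotients, i.e.\ to the finite model property of $IPC$ together with its refinement for the bounded-lattice relations presenting $D'$.

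I expect the main obstacle to be precisely this definability step, i.e.\ turning the purely game-theoretic content of a b-index transformation into an honest tuple of formulae and then verifying that the homomorphism so obtained respects the bounded-lattice structure of $D$ and reproduces $\psi$ at \emph{every} poset $P$ and under \emph{every} open restriction; the direction ``Heyting homomorphism $\Rightarrow$ b-index transformation'' needs only the degree bookkeeping and Proposition~\ref{prop:indexDecreases}. A few routine-but-fiddly points also have to be pinned down: the exact form of the Birkhoff correspondence (prime filters versus join-irreducibles, with the attendant order conventions), the naturality of $\mathcal{D}(\mathbf{F}(D))\cong h_{J(D)}$ along \emph{open} maps rather than arbitrary monotone ones, and the compatibility of the Heyting operations on $\mathbf{F}(D')$ with the pointwise/forcing description of subpresheaves.
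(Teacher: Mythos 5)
The paper gives no proof of this theorem --- it is imported from \cite[Chapter 4]{GhilardiZawadowski2011} --- and your sketch is a correct outline of the argument given there: the representable functor $\mathrm{Hom}(-,\mathcal{O}(P))$, the Birkhoff identification of $\mathcal{D}(\mathbf{F}(D))$ with $h_{J(D)}$, degree bookkeeping via Proposition~\ref{prop:indexDecreases} for one direction, and definability of restriction-closed, $\sim_k$-invariant classes of evaluations for fullness. The one ingredient you invoke without establishing --- that every such class is cut out by an element of $\mathbf{F}(D')$, i.e.\ by a formula of bounded implicational degree (the Fine/de~Jongh characteristic-formula machinery behind the ``identification'' mentioned after Proposition~\ref{p41.1}) --- is precisely the combinatorial heart of the cited proof, and you rightly flag it as the main obstacle rather than treating it as free.
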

 
 It is important to notice that in the subcategory mentioned in the
 above Theorem, products are computed as in the category of
 presheaves. This means that they are computed pointwise, like in the category of sets:
 in other words, we have that  $(h_L \times h_{L'})(P) = h_L(P) \times h_{L'}(P)$
 and $(h_L \times h_{L'})(f) = h_L(f) \times h_{L'}(f)$, for all $P$ and $f$.
  Notice moreover
   that $h_{L\times L'}(P)\simeq h_L(P) \times h_{L'}(P)$, so we have
   $h_{L\times L'}\simeq h_L \times h_{L'}$; in addition, 
  the two product
 projections  have b-index 0.
   The situation strongly contrasts with other kind of dualities, 
   see \cite{Esa74} for example, 
   for which products are difficult to compute. 
   The ease by which products are computed
   might be seen as the principal reason for tackling a proof of
   \RT by means of sheaf duality.

  As a final information, we need to identify the dual of the free Heyting algebra on one generator:
  
  \begin{proposition}
   The dual of the free Heyting algebra on one generator is $h_{\bf 2}$, where $\bf 2$ is the two-element poset $\set{ 0, 1}$ with $1\leq 0$.
  \end{proposition}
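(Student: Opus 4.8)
The plan is to recognise the free Heyting algebra on one generator as the Lindenbaum--Tarski algebra of $IPC$ over the single propositional variable $x$, and then to read off its dual sheaf directly from the conventions fixed in Section~\ref{sec:duality}. First I would note that the free Heyting algebra on the one-element set $\{x\}$ is freely generated, \emph{as a Heyting algebra}, by the free bounded distributive lattice on $\{x\}$, and the latter is the three-element chain $\set{0<x<1}$ (closing $\{x\}$ under $\wedge,\vee$ and the constants produces nothing beyond $0,x,1$). In particular the free Heyting algebra on one generator is an object of the category appearing in Theorem~\ref{thm:duality}, so the duality applies to it.

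Second, I would unwind the combinatorial side. Combining Theorem~\ref{thm:duality} with the completeness facts (1)--(2) recalled in Section~\ref{sec:duality}, the free Heyting algebra on a finite set $\ux$ of generators is dual to the evaluation sheaf $h_{L}$ with $L=\langle\mathcal{P}(\ux),\supseteq\rangle$: indeed, by those facts the $L$-evaluations for this $L$ are exactly the finite Kripke models over the language built from $\ux$, and an element of the dual Heyting algebra is a definable, $\sim_n$-invariant (for some $n$) set of such models closed under restrictions, which by the finite model property of $IPC$ corresponds bijectively --- and compatibly with all Heyting operations, since on presheaves these are computed pointwise, as emphasised after Theorem~\ref{thm:duality} --- to an $IPC$-provable-equivalence class of a formula over $\ux$. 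Specialising to $\ux=\{x\}$: the poset $\mathcal{P}(\{x\})$ has exactly the two elements $\emptyset$ and $\{x\}$, and since $\{x\}\supseteq\emptyset$ we get $\{x\}\leq\emptyset$; relabelling $\{x\}$ as $1$ and $\emptyset$ as $0$ turns $L$ into the two-element poset $\set{0,1}$ with $1\leq 0$, which is exactly ${\bf 2}$. Hence the free Heyting algebra on one generator is dual to $h_{\bf 2}$, the generator $x$ itself corresponding to the element $1\in{\bf 2}$, i.e. to the one-point Kripke model that forces $x$ at its root.

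I expect the only real obstacle to be bookkeeping rather than substance. Two points need care: (i) one must observe that the free Heyting algebra on one generator is genuinely \emph{freely generated by a finite bounded distributive lattice} --- namely the three-chain --- so that Theorem~\ref{thm:duality} applies verbatim; and (ii) one must track the order convention of Section~\ref{sec:duality}, which (as the footnote there warns) is opposite to the usual Kripke-semantics orientation, in order to land on ${\bf 2}=\set{0,1}$ with $1\leq 0$ rather than with $0\leq 1$. Once these are pinned down the statement follows by a routine unravelling of the duality, exactly parallel to the classical computation in Section~\ref{sec:classical}, where the analogous object was the two-element \emph{set}.
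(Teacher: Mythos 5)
Your argument is correct and is essentially the intended one: the paper states this proposition without proof (deferring to~\cite{GhilardiZawadowski2011}), and the justification is exactly your reduction --- the free Heyting algebra on $\{x\}$ is freely generated, qua Heyting algebra, by the three-element chain (the free bounded distributive lattice on one generator), whose Birkhoff dual is $\mathcal{P}(\{x\})$ ordered by $\supseteq$, i.e.\ ${\bf 2}$ with $1\leq 0$. One small caveat that does not affect the conclusion: your parenthetical justification that the correspondence respects the Heyting operations ``since on presheaves these are computed pointwise'' misreads the remark after Theorem~\ref{thm:duality}, which concerns \emph{products in the dual category} (dual to coproducts of algebras); implication on definable classes of evaluations is not computed pointwise but via restrictions, exactly as in the forcing clause for $\to$.
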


\section{Indexes and Periods over Finite Models}\label{sec:finite}

Taking into consideration the algebraic reformulation from Section~\ref{sec:classical} and the  information from the previous section, we can prove \RT
for $(IPC)$
by showing that \emph{all natural 
transformations from $h_{L} \times h_{\bf 2}$ into itself, commuting  
over the first projection $\pi_0$ and having a b-index, are ultimately periodic with period 2}.
Spelling this out, this means the following. Fix a finite poset $L$ and a natural transformation $\psi: h_{L} \times h_{\bf 2}\lora h_{L} \times h_{\bf 2}$ having a b-index such that the diagram
  \begin{center}
\resetparms
\settriparms[1`1`1;400] \Vtriangle[ h_{L}\times h_{\bf 2}` h_{L}\times h_{\bf 2} 
`h_{L} ;\psi`\pi_0`\pi_0 ]
\end{center}
\noindent
commutes; we have to find an $N$ such that $\psi^{N+2}=\psi^N$,
according to the dual reformulation of~\eqref{eq:Ruitenburgtheorem}.

From the commutativity of the above triangle, we can decompose $\psi$
as $\psi = \langle \pi_{0},\chi\rangle$, were both
$\pi_0 : h_{L} \times h_{\bf 2} \lora h_{L}$ and
$\chi : h_{L} \times h_{\bf 2} \lora h_{\bf
    2}$
  have a b-index; we assume that $n\geq 1$ is a b-index for both of
  them. \emph{We let such $\psi = \langle \pi_{0},\chi\rangle$ and $n$
    be fixed for the rest of the paper.}

Notice that for $\vu \in h_{L}(P)\times  h_{\bf 2}(P)$, we have
$$
{\psi^{k}}\vu= (v,u_k)
$$
where we put 
\begin{equation}\label{eq:def_un}
 u_0\eqdef u~~{\rm  and}~~ u_{{k}+1}\eqdef\chi(v,
 u_{{k}})
 \,.
\end{equation}
Since $P$  and $L$ are finite, it is clear that the sequence $\set{\psi^{k}\vu \mid k \geq 0}$ 
(and obviously also the sequence $\set{u_{k}\mid k \geq 0}$) must become ultimately periodic.

We show in this section that, for each finite set $P$ and for each
$\vu \in h_{L}(P)$, the period of the sequence
$\set{\psi^{k}\vu \mid k \geq 0}$ has $2$ as an upper bound, whereas
the index of $\set{\psi^{k}\vu \mid k \geq 0}$ can be bounded by the
maximum length of the chains in the finite poset $P$ (in the next
section, we shall bound such an index independently on $P$, thus
proving \RT).

Call $\vu \in h_{L}(P)$ \emph{2-periodic} (or just
\emph{periodic}\footnote{From now on, `periodic' will mean
  `2-periodic', i.e. `periodic with period 2'.}) iff we have
$\psi^2\vu=\vu$; a point $q\in P$ is similarly said periodic in
  $\vu$ iff $\vu_q$ is periodic. We shall only say that $p$ is
  periodic if an evaluation is given and understood from the
  context. We call a point \emph{\nonperiodic} if it is not periodic
  (w.r.t. a given evaluation).
  
\begin{lemma}\label{lem:period}
  Let $\vu \in h_{L}(P)$ and $p\in P$ be such that all $q\in P$,
  $q< p$, are periodic. Then either $\vu_p$ is periodic or $\psi\vu_p$
  is periodic.  Moreover, if $\vu_p$ is \nonperiodic and
  $u_0(p)=u(p)=1$, then $u_1(p)=\chi(u,v)(p)=0$.
\end{lemma}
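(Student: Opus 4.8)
The plan is to work locally at the point $p$, using the recursive characterisation of the forcing/bisimulation relations together with the fact that everything strictly below $p$ is already $2$-periodic. Write $\vu_p = (v,u)$ (restricted to $\downset p$), and recall $u_{k+1} = \chi(v,u_k)$. First I would record the shape of the orbit of $\vu_p$ under $\psi$: since $\psi = \langle \pi_0,\chi\rangle$, applying $\psi$ never changes the $v$-component, so the orbit is determined by the sequence $u_0,u_1,u_2,\dots$ of elements of $h_{\bf 2}(\downset p)$, and $\vu_p$ is periodic exactly when $u_2 = u_0$.

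The key step is to compare $u_2$ and $u_0$ one point at a time. For a point $q < p$ we know by hypothesis that $\vu_q$ is periodic, i.e. $\psi^2\vu_q = \vu_q$; since restriction commutes with $\psi$ (the inclusion $\downset q \subseteq \downset p$ is open, so $\chi(v,u)_q = \chi(v_q,u_q)$ and likewise for iterates), this gives $(u_2)_q = (u_0)_q$ for all $q<p$. So $u_2$ and $u_0$ can only differ \emph{at the root $p$ itself}. Thus there are exactly the following possibilities for the value at $p$ of the sequence $u_0(p),u_1(p),u_2(p),\dots \in \set{0,1}$, once we note that from the second term on, the sequence at $p$ is determined by data that has already stabilised below $p$: the orbit at $p$ is an orbit of a function on the two-element set $\set{0,1}$, hence its index is $0$ or $1$ and its period is $1$ or $2$. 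Concretely, either $u_1(p) = u_0(p)$ — but then, since $u_1$ agrees with $u_0$ below $p$ as well (indeed $(u_1)_q=(u_0)_q$ for $q<p$ because $q$ is periodic, so $\psi\vu_q$ and $\vu_q$ have the same $\sim_0$-class after enough iterations; more directly $u_1$ and $u_2$ agree below $p$, and combined with $u_1(p)=u_0(p)$ a short argument using Proposition~\ref{p41.1} gives $u_1=u_0$, so $\vu_p$ itself is periodic) — or $u_1(p)\neq u_0(p)$, in which case $u_2(p)$ is either back to $u_0(p)$, giving $u_2=u_0$ and $\vu_p$ periodic, or equal to $u_1(p)$, and then I claim $\psi\vu_p = (v,u_1)$ is periodic: indeed $u_3(p)=u_2(p)=u_1(p)$ by the same two-element reasoning (the map $b\mapsto\chi(v,\cdot)(p)$ applied to anything agreeing with $u_1$ below $p$ sends both relevant values to $u_1(p)$), and below $p$ we already have $u_3=u_1$, so $\psi^2(\psi\vu_p)=\psi\vu_p$.

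For the last sentence of the lemma: if $\vu_p$ is \nonperiodic, the analysis above forces us into the branch $u_1(p)\neq u_0(p)$ (otherwise $\vu_p$ would be periodic). Hence if $u_0(p)=u(p)=1$ then necessarily $u_1(p)=\chi(v,u)(p)=0$, which is exactly the stated conclusion.

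\textbf{Main obstacle.} The delicate point is making rigorous the passage ``$u_1$ and $u_0$ agree below $p$ and at $p$, therefore $u_1 = u_0$ as evaluations on $\downset p$.'' Agreement of two elements of $h_{\bf 2}(\downset p)$ at every point is the same as equality of the underlying order-preserving maps $\downset p \to {\bf 2}$, so this is really just: two functions that agree pointwise are equal — but one has to be careful that ``$q$ is periodic'' ($\psi^2\vu_q=\vu_q$) genuinely yields $(u_1)_q=(u_0)_q$ and not merely $(u_2)_q=(u_0)_q$; this needs the observation that the length-$1$ orbit-position at $q$ is inherited because the whole orbit at $q$ has period dividing $2$ and we are comparing terms two apart versus one apart. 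The clean way to handle this, and the step I expect to take the most care over, is to prove simultaneously by induction on the poset (from the bottom up) the stronger statement: for every $q\le p$, either $\vu_q$ is periodic, or $\psi\vu_q$ is periodic, \emph{and} in the latter non-periodic case $u_0(q)\neq u_1(q)$; the inductive step is then exactly the two-element-set bookkeeping sketched above, with Proposition~\ref{p41.1} used only to convert pointwise agreement into agreement of evaluations.
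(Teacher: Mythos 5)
There is a genuine gap, and it sits exactly where you flagged your ``main obstacle'' --- but it is not a bookkeeping issue, it is a false step. You reduce the problem to ``the orbit at $p$ is an orbit of a function on the two-element set $\set{0,1}$''. This is not so: $u_{k+1}(p)=\chi(v,u_k)(p)$ depends on the whole restriction of $u_k$ to $\downset p$, not on $u_k(p)$ alone. What the hypothesis gives you strictly below $p$ is $2$-periodicity, i.e.\ $(u_{k+2})_q=(u_k)_q$ for $q<p$; it does \emph{not} give $(u_1)_q=(u_0)_q$. Writing $x$ and $y$ for the restrictions of $u_0$ and $u_1$ to $\ddownset p$, the state that actually evolves autonomously is the pair $\typeOfP{a_k}{x_k}$ with $a_k=u_k(p)$ and $x_k$ alternating between $x$ and $y$: it lives in a four-element set, where a priori an orbit can have period $3$ or $4$ or index $\geq 2$. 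For instance $\typeOfP{0}{x}\to\typeOfP{1}{y}\to\typeOfP{1}{x}\to\typeOfP{0}{y}\to\typeOfP{0}{x}$ is a period-$4$ pattern consistent with everything your argument uses, and under it neither $\vu_p$ nor $\psi\vu_p$ would be periodic. Several of your intermediate claims fail for the same reason: ``$u_1(p)=u_0(p)$ implies $\vu_p$ periodic'' is false (the paper's case $i=0$, $j=1$ with $c=1$, $d=0$ allows $u_0(p)=u_1(p)=0$ while $\vu_p$ is non-periodic and only $\psi\vu_p$ is periodic), and the parenthetical justifying $u_3(p)=u_2(p)$ does not go through because $u_2$ agrees with $u_0$ below $p$, not with $u_1$.

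The missing ingredient is order-preservation of evaluations: since $1\leq 0$ in $\bf 2$, the value $u_k(p)=1$ forces $u_k$ to be constantly $1$ on all of $\downset p$, hence forces $x_k$ (and, one step later, the other of $x,y$) to be the constant map $1$. This collapse of the four-element state space whenever the value $1$ appears at $p$ is the only reason periods $3$ and $4$ and larger indexes cannot occur; the paper's proof is precisely a case analysis on the first repeat of the sequence $\typeOfP{a_k}{x_k}$ that invokes this collapse at every turn, and the paper stresses after Theorem~\ref{thm:main} that the argument makes essential use of order-preservation (it fails for modal logics). Your proposal never invokes monotonicity, so it cannot be repaired by tightening the induction you describe at the end; you need to redo the case analysis on the four-element state space with the monotonicity constraint in hand. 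The same applies to the final sentence of the lemma: $u_0(p)=u_1(p)=1$ yields periodicity of $\vu_p$ only because it forces $x=y=1$, not because of a general dichotomy on the branch $u_1(p)=u_0(p)$.
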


\begin{proof} We work by induction on the height of $p$ (i.e. on the maximum $\leq$-chain starting with $p$ in $P$).
  If the height of $p$ is $1$, then the argument is the same as in the
  classical logic case (see Section~\ref{sec:classical}).

  If the height is greater than one, then we need a simple
  combinatorial check about the possible cases that might arise.
  Recalling the above definition~\eqref{eq:def_un} of the
  $\bf 2$-evaluations
  $u_n$, 
  the induction hypothesis tells us that there
  is $M$  big enough so that so for all $k \geq M$ and
  $q < p$, $(u_{k+2})_{q} = (u_{k})_{q}$.

Let $\ddownset p = \set{q \in P \mid q < p}$.  We shall represent
$(u_{k})_{p}$ as a pair $\typeOfP{a_{k}}{x_{k}}$, where
$a_{k} = u_{k}(p)$ and $x_{k}$ is the restriction of
$(u_{k})_{p}$ to $\ddownset p$.
  
Let us start by considering a first repeat $(i,j)$ of the sequence
$\set{a_{M +k}}_{k\geq 0}$ - that is $i$ is the smallest $i$ such that
there is $j > 0$ such that $a_{M +i+j}= a_{M +i}$
and $j$ is the smallest such $j$. Since the $a_{M +n}$ can only take
value 0 or 1, we must have $i+j \leq 2$.  We show that the sequence
$\set{(u_{M +k})_p}_{k\geq 0}$ has first repeat taken from
  $$
  (0,1), (0,2), (1,1), (1,2)\,.ùì
  $$ 
This shall imply in the first two cases
  that $\vu_p$ is periodic or, in the last two cases, that $\psi\vu_p$
  is periodic.  To our goal, let $x = x_{M}$ and $y = x_{M +1}$
(recall that we do now know whether $x = y$).

  Notice that, if $j = 2$, then $i = 0$ and a first repeat for
  $\set{(u_{k})_p}_{k \geq M}$, is $(0,2)$, as in the diagram below
  \begin{align*}
    \typeOfP{a}{x}\typeOfP{b}{y}\typeOfP{a}{x}\,.
  \end{align*}

  Therefore, let us assume $j = 1$ (so $i \in \set{0,1}$). Consider
  firstly $i = 0$:
  \begin{align*}
    \typeOfP{a}{x}\typeOfP{a}{y}\typeOfP{c}{x}\typeOfP{d}{y}
  \end{align*}
  If $x = y$, then we have a repeat at $(0,1)$. Also, if $a =1$, then
  the mappings
    $x$ and $y$ are uniformely $1$,\footnote{Recall that our
    evaluations are order-preserving maps and we have $1\leq 0$ in
    $\bf 2$.}  
  so again $x = y$
  and $(0,1)$ is a repeat.

  So let us assume $x \neq y$ and $a = 0$. If $c = a$, then we have
  the repeat $(0,2)$ as above.  Otherwise $c = 1$, so $x = 1$.  We
  cannot have $d = 1$, otherwise $1 = x = y$. Thus
  $d = 0 = a$, and the repeat is $(1,2)$.

  Finally, consider $i = 1$ (so $a \neq b$ and $j = 1$):
  \begin{align*}
    \typeOfP{a}{x}\typeOfP{b}{y}\typeOfP{b}{x}\typeOfP{d}{y}
  \end{align*}
  We have two  subcases: $b=1$ and $b=0$. If $b=1$, then $a=0$ and $x = 1 = y$: we have a repeat at $(1,1)$.
  
  In the last subcase, we have $b=0$, $a=1$ and now if $d=0$ we have a
  repeat at $(1,2)$ and if $d=1$ we have a repeat $(1,1)$ (because
  $d = a = 1$ implies $y = 1$ and $x =1$).

  The last statement of the Lemma is also obvious in view
    of the fact that if $a = b = 1$, then $x = y =1$, so $p$ is
    periodic.
\end{proof}

\begin{corollary}
 Let $N_P$ be the height of $P$; then $\psi^{N_P}\vu$ is periodic for all  $\vu \in h_{L}(P)$.
\end{corollary}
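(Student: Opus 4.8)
The plan is to derive the Corollary from Lemma~\ref{lem:period} by an induction on height, after recording two elementary observations. First, periodicity is preserved by $\psi$: if $\psi^2 w = w$ then $\psi^2(\psi w) = \psi(\psi^2 w) = \psi w$, so $\psi w$ is again periodic; consequently, if $\psi^m w$ is periodic then $\psi^k w$ is periodic for every $k \geq m$. Second, as noted in Section~\ref{sec:duality}, restriction to a downset commutes with $\psi$, so $(\psi^h\vu)_q = \psi^h(\vu_q)$ for every $q \in P$ and every $h \geq 0$; in particular a point $q$ is periodic in the evaluation $\psi^h\vu$ exactly when $\psi^h(\vu_q)$ is periodic.

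Next I would prove, by induction on $h \geq 1$, the statement: \emph{for every $p \in P$ with $\hheight(p) \leq h$, the evaluation $\psi^h(\vu_p)$ is periodic.} For $h = 1$ the point $p$ is minimal, so the hypothesis of Lemma~\ref{lem:period} that all $q < p$ be periodic holds vacuously; the Lemma then yields that $\vu_p$ or $\psi\vu_p$ is periodic, and in either case $\psi^1(\vu_p)$ is periodic by the first observation. For the inductive step, let $\hheight(p) \leq h+1$. Every $q < p$ has $\hheight(q) \leq h$, so by the inductive hypothesis $\psi^h(\vu_q)$ is periodic; by the second observation this says precisely that every $q < p$ is periodic in $\psi^h\vu$. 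Applying Lemma~\ref{lem:period} to the evaluation $\psi^h\vu$ and the point $p$, we conclude that either $(\psi^h\vu)_p = \psi^h(\vu_p)$ is periodic or $\psi\bigl((\psi^h\vu)_p\bigr) = \psi^{h+1}(\vu_p)$ is periodic; in both cases $\psi^{h+1}(\vu_p)$ is periodic by the first observation, which closes the induction.

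Finally I would specialize to $h = N_P$. Every $p \in P$ satisfies $\hheight(p) \leq N_P$, in particular the root $\Rho{P}$, and $\vu_{\Rho{P}} = \vu$ because $\downset\Rho{P} = P$; hence $\psi^{N_P}\vu = \psi^{N_P}(\vu_{\Rho{P}})$ is periodic, as claimed.

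I do not expect a serious obstacle here: all the genuine combinatorial work is already carried out in Lemma~\ref{lem:period}, and the Corollary is a routine ``height induction''. The only points requiring a little attention are (i) choosing the induction hypothesis so that the number of iterations of $\psi$ is bounded by the height of the individual point $p$ rather than by $N_P$, and (ii) the repeated use of the fact that restriction to a downset commutes with $\psi$, which is exactly what lets the induction hypothesis at the points $q < p$ feed into an application of Lemma~\ref{lem:period} at $p$.
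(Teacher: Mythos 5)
Your proof is correct and is precisely the ``easy induction on $N_P$, based on the previous Lemma'' that the paper invokes: the height induction, the fact that $\psi$ preserves periodicity, and the commutation $(\psi^h\vu)_q=\psi^h(\vu_q)$ are exactly the intended ingredients. No discrepancy with the paper's argument.
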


\begin{proof}
 An easy induction on $N_P$, based on the previous Lemma.
\end{proof}

\section{Ranks}\label{sec:ranks}

Ranks (already introduced in~\cite{fine}) are a powerful tool suggested by bounded bisimulations; in our context the useful notion of rank is given below.
Recall that $\psi = \langle \pi_{0},\chi\rangle$ and that $n\geq 1$ is a b-index for $\psi$ and $\chi$.

Let $\vu \in h_{L}(P)$ be given.  The \emph{type} of a periodic point
$p\in P$ is  the pair of equivalence classes
\begin{equation}\label{eq:pairs}
 \langle [(v_p,u_p)]_{n-1}, [\psi(v_p,u_p)]_{n-1}\rangle.
\end{equation}
The \emph{rank} of a point $p$ (that we shall denote by
  $rk(p)$) is the cardinality of the set of distinct types of the
periodic points $q\leq p$.  Since $\sim_{n-1}$ is an equivalence
relation with finitely many equivalence classes, the rank cannot
exceed a positive number $R(L,n)$ (that can be computed in function of
$L, n$).

Clearly we have $rk(p)\geq rk(q)$ in case $p\geq q$.  Notice that an
application of $\psi$ does not decrease the rank of a point: this is
because the pairs~\eqref{eq:pairs} coming from a periodic point just
get swapped after applying $\psi$.  A non-periodic point $p\in P$ has
\emph{minimal rank} iff we have $rk(p)=rk(q)$ for all \nonperiodic
$q\leq p$.

\begin{lemma}\label{lem:minrank}
  Let $p\in P$ be a \nonperiodic point of minimal rank in
  $\vu \in h_{L}(P)$; suppose also that $\vu$ is constant
    on the set of all \nonperiodic points in
    $\downarrow p$.  Then we have
  $\psi^m\vu_{q_0}\sim_n\psi^m\vu_{q_1}$ for all $m\geq 0$ and for all
  \nonperiodic points $q_0, q_1\leq p$.
\end{lemma}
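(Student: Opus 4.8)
The plan is to argue by induction on $m$, exploiting the b-index hypothesis together with Proposition~\ref{prop:indexDecreases} and the structure of periodic points established in Lemma~\ref{lem:period}. The base case $m=0$ reduces to showing $\vu_{q_0}\sim_n\vu_{q_1}$ for all nonperiodic $q_0,q_1\leq p$. Here I would use the recursive characterization of $\sim_{n+1}$ from Proposition~\ref{p41.1}: to match a move $q'\leq q_0$, one distinguishes whether $q'$ is periodic or nonperiodic. If $q'$ is nonperiodic, then $q'\leq p$ and $q'$ is again of minimal rank (since $p$ has minimal rank, every nonperiodic $q\leq p$ shares the same rank, hence so does every nonperiodic point below such a $q$), so we can recurse. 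If $q'$ is periodic, we must find a periodic $q''\leq q_1$ with $\vu_{q'}\sim_{n-1}\vu_{q''}$: this is where minimality of rank is essential — since $q_0$ and $q_1$ have equal rank, the set of types of periodic points below $q_0$ coincides with that below $q_1$, so a suitable $q''$ exists. The constancy of $\vu$ on nonperiodic points pins down the "root value" $u(q_0)=u(q_1)$, giving $\vu_{q_0}\sim_0\vu_{q_1}$.

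For the inductive step, suppose the claim holds for $m$; I want $\psi^{m+1}\vu_{q_0}\sim_n\psi^{m+1}\vu_{q_1}$. Write $\psi^{m}\vu_{q}=(v_q,(u_q)_m)$ in the notation of~\eqref{eq:def_un}. The inductive hypothesis gives $\psi^m\vu_{q_0}\sim_n\psi^m\vu_{q_1}$; applying the b-index property of $\psi$ (which has b-index $n$) yields $\psi^{m+1}\vu_{q_0}\sim_0\psi^{m+1}\vu_{q_1}$ directly — but we need $\sim_n$, not merely $\sim_0$. To boost this, I would instead prove the stronger statement $\psi^m\vu_{q_0}\sim_{n}\psi^m\vu_{q_1}$ for \emph{all} $m$ simultaneously by strengthening what is carried along the induction on the poset structure: namely show, by induction on the height of the nonperiodic points, that for nonperiodic $q_0,q_1\leq p$ and every $m$, the evaluations $\psi^m\vu_{q_0}$ and $\psi^m\vu_{q_1}$ agree at the root and their respective restrictions to immediate predecessors can be matched up to $\sim_{n-1}$, re-using Lemma~\ref{lem:period} to control how the $\bf 2$-component evolves at each such $q$. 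Lemma~\ref{lem:period} guarantees that below any nonperiodic $q$ of minimal rank all strictly smaller points are periodic only once we have descended far enough; in general the matching of a nonperiodic successor $q'$ of $q_0$ with a nonperiodic successor of $q_1$ is handled by the height induction, and the matching of periodic successors is handled by the rank-equality argument above, whose validity is preserved under $\psi$ since $\psi$ merely swaps the two components of the type~\eqref{eq:pairs}.

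The main obstacle I anticipate is the interaction between the two inductions — on $m$ and on the height/rank of nonperiodic points — and in particular ensuring that when Player~2 responds to a move into a periodic point, the resulting $\sim_{n-1}$-equivalence is stable under all further applications of $\psi$. This requires the observation that periodic points stay periodic and their types are permuted (not lost) by $\psi$, so that "same set of types below $q_0$ as below $q_1$" is an invariant of the whole orbit $\set{\psi^m\vu}_{m\geq 0}$. A secondary subtlety is bookkeeping the drop from $\sim_n$ to $\sim_{n-1}$ when passing to restrictions (Proposition~\ref{p41.1}) and confirming that $n-1\geq 0$ suffices for the periodic-point types to be well-defined, which is exactly why the type in~\eqref{eq:pairs} was defined using $\sim_{n-1}$ rather than $\sim_n$. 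Once these invariants are isolated, the verification of each case is a routine application of Propositions~\ref{p41.1} and~\ref{prop:indexDecreases}.
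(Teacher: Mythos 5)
Your base case ($m=0$) is essentially the paper's argument: Player 2 copies moves among the \nonperiodic points of $\downarrow p$ (legitimate because $\vu$ is constant there), and answers moves into periodic points by using the equality — not just equicardinality — of the sets of types of periodic points below any two \nonperiodic $q_0,q_1\leq p$, which follows from minimality of rank plus the containment of both sets in the set of types below $p$. The gap is in your inductive step. You correctly derive $\psi^{m+1}\vu_{q_0}\sim_0\psi^{m+1}\vu_{q_1}$ from the induction hypothesis via the b-index, but you then discard this as ``not enough'' and pivot to a height induction. The point you are missing is that this $\sim_0$ statement, holding \emph{uniformly for all pairs} $q_0,q_1$ of \nonperiodic points below $p$, says exactly that $\psi^{m+1}\vu$ is constant on those points — which is the only property of $\vu$ (besides the type-set equality) that your $m=0$ game argument used. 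Since the periodic points remain periodic and their types are merely swapped by $\psi$, the type-matching fact persists at stage $m+1$, and the base-case strategy for Player 2 applies verbatim to $\psi^{m+1}\vu$, yielding $\sim_n$ rather than $\sim_0$. This is how the paper closes the induction on $m$ alone, with no secondary induction.

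Your proposed replacement — a simultaneous induction on $m$ and on the height of \nonperiodic points, carrying a $\sim_{n-1}$-matching of restrictions — does not obviously close. When Player 1 moves from $q_0$ to a \nonperiodic $q'<q_0$, the point $q''\leq q_1$ that Player 2 must produce can be taken to be $q_1$ itself (or any \nonperiodic point below it), so the new pair $(q',q'')$ need not have smaller height and the height induction gives you nothing for it; the correct decreasing parameter is the number of remaining rounds of the game (equivalently, the subscript in $\sim_k$), which is what the game formulation handles automatically. Moreover, the root agreement of $\psi^{m+1}\vu_{q_0}$ and $\psi^{m+1}\vu_{q_1}$ can only come from the b-index applied to the stage-$m$ hypothesis — the very step you set aside. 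Also note a small imprecision: Proposition~\ref{p41.1} quantifies over all points of the downset, not only immediate predecessors.
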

\begin{proof} We let $\Pi$ be the set of periodic points of $\vu$ that
  are in $\downarrow p$ and let $\Pi^c$ be
  $(\downarrow p)\setminus \Pi$ .  Let us first observe that for every
  $r\in \Pi^c$, we have
\begin{eqnarray*}
\smalleql{\{\langle 
 [(v_s,u_s)]_{n-1}, [\psi(v_s,u_s)]_{n-1}\rangle \mid s\leq r, ~s~\hbox{is  periodic}\}}
& \\
& =~~  \{\langle [(v_s,u_s)]_{n-1}, [\psi(v_s,u_s)]_{n-1}\rangle \mid s\leq p, ~s~\hbox{is periodic}\}
\end{eqnarray*}
(indeed the inclusion $\subseteq$ is because $r\leq p$ and the
inclusion $\supseteq$ is by the minimality of the rank of $p$).
Saying this in words, we have that ``for every periodic $s\leq p$
there is a periodic $s'\leq r$ such that
$(v_s,u_s)\sim_{n-1} (v_{s'},u_{s'})$ and
$\psi(v_s,u_s)\sim_{n-1} \psi(v_{s'},u_{s'})$''; also (by the
definition of 2-periodicity), ``for all $m\geq 0$, for every periodic
$s\leq p$ there is a periodic $s'\leq r$ such that
$\psi^m(v_s,u_s)\sim_{n-1} \psi^m(v_{s'},u_{s'})$''.
By letting both $q_0, q_1$ playing the role of
$r$, we get:
\begin{fact*}
  For every $m\geq 0$, for every
  $q_0, q_1\in \Pi^c$, for every periodic $s\leq q_0$ there is a
  periodic $s'\leq q_0$ such that
  $\psi^m(v_s,u_s)\sim_{n-1}
    \psi^m(v_{s'},u_{s'})$ (and vice versa).
\end{fact*}

We now prove the statement of the theorem by induction on $m$; take
two points $q_0, q_1\in \Pi^c$.

For $m=0$, $\vu_{q_0}\sim_n\vu_{q_1}$ is established as follows: as
long as Player 1  plays in $\Pi^c$, we know $\vu$ is constant so that
Player 2 can answer with an identical move still staying within $\Pi^c$;
as soon as it plays in $\Pi$, Player 2 uses the above Fact to win the game.

The inductive case
  $\psi^{m+1}\vu_{q_0}\sim_n\psi^{m+1}\vu_{q_1}$ is proved in the same
  way, using the Fact (which holds for the integer $m +1$) and
  observing that $\psi^{m+1}$ is constant on $\Pi^{c}$. The latter
  statement can be verified as follows: by the induction hypothesis we
  have $\psi^m\vu_{q}\sim_n\psi^m\vu_{q'}$, so we derive from
  Proposition~\ref{prop:indexDecreases}
  $\psi^{m+1}\vu_{q}\sim_0\psi^{m+1}\vu_{q'}$, for all
  $q, q'\in \Pi^c$; that is, $\psi^{m+1}$ is constant on $\Pi^{c}$. 
\end{proof}

\section{\RT}\label{sec:main}

We can finally prove: 

\begin{theorem} [\RT for IPC]\label{thm:main}
There is $N\geq 1$ such that we have $\psi^{N+2}=\psi^N$.
\end{theorem}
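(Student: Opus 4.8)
The strategy is to find a bound $N$ on the index that does not depend on the poset $P$, by exploiting the rank machinery of Section~\ref{sec:ranks}. We already know (Corollary following Lemma~\ref{lem:period}) that $\psi^{N_P}\vu$ is periodic, where $N_P$ is the height of $P$; the point is to replace $N_P$ by a quantity depending only on $L$ and $n$, such as the bound $R(L,n)$ on ranks. Since $\psi^{N+2}=\psi^N$ as natural transformations means $\psi^{N+2}\vu = \psi^N\vu$ for every finite rooted poset $P$ and every $\vu\in h_L(P)\times h_{\bf 2}(P)$, and since $\psi$ commutes with restrictions (so $\psi^k(\vu)_p = \psi^k(\vu_p)$), it suffices to prove that for a suitable $N$ (function of $L,n$ only), $\psi^N\vu$ is periodic for every $\vu$; equivalently, every point of $P$ is periodic in $\psi^N\vu$.

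The core claim I would isolate is: \emph{if $p\in P$ is non-periodic in $\vu$, then $\psi\vu_p$ has strictly fewer non-periodic points of minimal rank ``at the bottom'', or more precisely, some rank-theoretic complexity measure strictly decreases when we pass from $\vu$ to $\psi\vu$}, unless $p$ has already become periodic. Concretely, I would argue by induction on the rank $rk(p)$. Fix a non-periodic $p$ of minimal rank among non-periodic points below it. If $\vu$ fails to be constant on the non-periodic points of $\downarrow p$, pick a non-periodic $q<p$ with smaller rank and recurse. Once we reach a non-periodic $p$ of minimal rank with $\vu$ constant on the non-periodic points of $\downarrow p$, Lemma~\ref{lem:minrank} applies: all non-periodic $q\leq p$ satisfy $\psi^m\vu_{q_0}\sim_n\psi^m\vu_{q_1}$ for all $m$. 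Combined with Lemma~\ref{lem:period} (which forces, for a non-periodic point all of whose strict predecessors are periodic, that either $\vu_p$ or $\psi\vu_p$ is periodic, with the $u_0(p)=1\Rightarrow u_1(p)=0$ normalization), this should show that after boundedly many applications of $\psi$ — bounded by something like the number of rank values, i.e. $R(L,n)$, times a small constant — every non-periodic point of minimal rank becomes periodic, hence the rank of the non-periodic part drops. Iterating over the at most $R(L,n)$ possible rank values yields a uniform bound $N$.

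More carefully, I would set up an induction on $rk(p)$ to prove: there is a function $N(r)$ (depending only on $L,n$) such that if $rk(p)\leq r$ in $\vu$, then $p$ is periodic in $\psi^{N(r)}\vu$. For the base case $r=0$: $p$ has no periodic points below it at all — combined with the assumption that we're looking at the minimal-rank situation, Lemma~\ref{lem:period} applied along the height of $\downarrow p$ (or rather the Fact from Lemma~\ref{lem:minrank} plus Lemma~\ref{lem:period}) pins down $\psi\vu_p$ as periodic. For the inductive step, having made all non-periodic points of rank $<r$ periodic by applying $\psi^{N(r-1)}$, the remaining non-periodic points have rank exactly $r$ and any non-periodic point of minimal rank among them has $\vu$ varying only over periodic points plus rank-$r$ non-periodic points below it; after $\psi^{N(r-1)}$ those predecessors of lower rank are periodic, so Lemma~\ref{lem:minrank}'s hypothesis (constancy on non-periodic points of $\downarrow p$) can be arranged or its conclusion invoked, and one more bounded block of $\psi$-iterations makes $p$ periodic, strictly decreasing the set of non-periodic points. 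Taking $N = N(R(L,n))$ gives the theorem, and then periodicity of $\psi^N\vu$ gives $\psi^{N+2}\vu=\psi^N\vu$ for all $\vu$, hence $\psi^{N+2}=\psi^N$.

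The main obstacle I anticipate is the bookkeeping in the inductive step: making precise that constancy of $\vu$ on the non-periodic points of $\downarrow p$ can be \emph{achieved} (not just assumed) after finitely many uniform applications of $\psi$, and that the block of iterations needed to turn a minimal-rank non-periodic point periodic is genuinely bounded by data of $L$ and $n$ only. This requires combining Lemma~\ref{lem:minrank} (which gives $\sim_n$-equivalence of the iterates $\psi^m\vu_{q}$ over non-periodic $q\leq p$, hence via Proposition~\ref{prop:indexDecreases} eventual $\sim_0$-constancy) with the height-free form of Lemma~\ref{lem:period}'s argument, and checking that the period-2 behaviour established pointwise glues to a genuine equality of natural transformations. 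The rank being monotone and non-decreasing under $\psi$, plus the ``swap'' observation that $\psi$ does not change the type-set of a periodic point, is what keeps the induction well-founded.
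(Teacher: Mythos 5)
Your overall strategy --- reduce to pointwise periodicity of $\psi^N\vu$ for a uniform $N$, and obtain uniformity from the bound $R(L,n)$ on ranks via Lemmas~\ref{lem:period} and~\ref{lem:minrank} --- is the right one, and you correctly identify the crux: the constancy hypothesis of Lemma~\ref{lem:minrank} must be \emph{achieved}, not assumed. But your proposed induction does not achieve it, and the step you offer for that purpose fails. You suggest that ``if $\vu$ fails to be constant on the non-periodic points of $\downarrow p$, pick a non-periodic $q<p$ with smaller rank and recurse''. The rank of a point counts types of \emph{periodic} points below it; it says nothing about the values of $\vu$ on the \nonperiodic part. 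Two \nonperiodic points of the same (minimal) rank below $p$ may perfectly well carry different values of $\vu$, so non-constancy gives you no rank drop to recurse on. Your base case has the same problem in pure form: a point of rank $0$ has no periodic points below it at all, Lemma~\ref{lem:minrank} then requires constancy on all of $\downarrow p$, and falling back on ``Lemma~\ref{lem:period} applied along the height of $\downarrow p$'' reintroduces exactly the dependence on $P$ you are trying to eliminate.

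The paper closes this gap with two devices your proposal lacks. First, the primary induction is not on rank but on the height $\vert v(p)\vert$ of the value $v(p)$ in $L$: once all points of smaller $v$-height have been made periodic (by the induction hypothesis, in a number of steps depending only on $L$ and $n$), every remaining \nonperiodic $q\leq p$ satisfies $v(q)=v(p)$, so the \emph{first} component of $\vu$ is automatically constant on the \nonperiodic part. Rank enters only as the secondary parameter: the minimum rank $r$ of \nonperiodic points below $p$ cannot decrease under $\psi$, and the claim is that it strictly increases (or everything becomes periodic) every two steps, giving $N(l)=N(l-1)+2R$. Second, constancy of the \emph{second} component $u$ is arranged by partitioning $\downarrow p_0$ into $\Eper$, $E_0$, $E_1$, $E_{01}$: on $E_0$ and $E_1$ the hypothesis of Lemma~\ref{lem:minrank} holds outright, so one application of $\psi$ makes them periodic along with the frontier points; and the surviving points of $E_{01}$ are shown to acquire $u$-value $0$ after that application (this uses the second statement of Lemma~\ref{lem:period}, the one you call a ``normalization'' --- it is in fact load-bearing here), so they constitute the new $E_0$ and one further application of $\psi$ finishes them. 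Without the induction on $\vert v(p)\vert$ and without this two-step $E_{01}\to E_0\to{}$periodic mechanism, the ``bookkeeping'' you defer is not bookkeeping but the proof.
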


\begin{proof} Let $L$ be a finite poset and let $R\eqdef R(L,n)$ be
  the maximum rank for $n,L$ (see the previous section).  Below, for
  $e\in L$, we let $\vert e\vert$ be the height of $e$ in $L$,
  i.e. the maximum size of chains in $L$ whose maximum element is
  $e$; we let also $\vert L \vert$ be the maximum
  size of a chain in $L$.  We make an induction on
  natural numbers $l\geq 1$ and show the following: \emph{(for each
  $l \geq 1$) there is $N(l)$ such that for every $(v,u)$ and
  $p\in dom(v,u)$ such that $l\geq \vert v(p)\vert$, we have that
  $\psi^{N(l)}(v_p, u_p)$ is periodic.} Once this is proved,
  the statement
    of the Theorem shall be proved with
    $N= N(\vert L\vert)$.\footnote{It will turn out that $N(l)$ is
    $2R(l-1)+1$. }

  If $l=1$, it is easily seen that we can put $N(l)=1$ (this case is
  essentially the classical logic case).

  Pick a $p$ with $\vert v(p)\vert =l>1$; let $N_0$ be the maximum of
  the values $N(l_0)$ for $l_0< l$:\footnote{It is easily seen that we
    indeed have $N_{0} = N(l-1)$. }
    we show that we can take $N(l)$ to be $N_0+2R$.

    Firstly, let $(v, u_0) := \psi^{N_0}\vu$ so all $q$ with
    $\vert v(q)\vert < l$ are periodic in $(v,u_0)$.  After such iterations,
    suppose that $p$ is not yet periodic in $(v, u_0)$.  We let $r$ be
    the minimum rank of points $q\leq p$ which are not
    periodic 
    (all such points $q$ must be such that $v(q)=v(p)$); we show that
    after \emph{two iterations} of $\chi$, all points $p_0\leq p$
    having rank $r$ become periodic or increase their rank, thus
    causing the overall minimum rank below $p$ to increase: this means
    that after at most $2(R-r)\leq 2R$ iterations of $\psi$, all
    points below $p$ ($p$ itself included!)  become periodic
  (otherwise said, we take $R-r$ as the secondary parameter of our double induction).

Pick $p_0\leq p$ having minimal rank $r$;
 thus we have that all $q\leq p_0$
in $(v, u_0)$ are now either periodic or have the same rank and the same $v$-value  as $p_0$
(by the choice of $N_0$ above). Let us divide the  points of $\downarrow p_0$ into four subsets:
\begin{align*}
\Eper~ := &~ \set{ q \mid ~q ~{\rm is~periodic}}\\
E_0~~~\, := &~ \set{ q \mid~q\not\in \Eper~\&~\forall q'\leq q ~(q'\not\in \Eper \Rightarrow u_0(q')=0)} \\
E_1~~~\, := &~ \set{ q \mid~q\not\in \Eper~\&~\forall q'\leq q ~(q'\not\in \Eper \Rightarrow u_0(q')=1)}\\
E_{01}~~\, := &~ \set{ q \mid~ q'\not\in \Eper\cup E_1\cup E_0}\,.
\end{align*}
Let us define \emph{frontier point} a \nonperiodic point $f\leq p$ such that
all $q<f$ are periodic (clearly, a frontier point belongs to
$E_0\cup E_1$); by Lemma~\ref{lem:period}, all frontier points become
periodic after applying $\psi$.  Take a point
$q\in E_i$ and a frontier point $f$ below it;
 since $q$ also has
minimal rank and the hypotheses of Lemma~\ref{lem:minrank} are
satisfied for $\vu_q$, we have in particular that
$\psi^m(v,u_0)_{q'}=\psi^m(v,u_0)_f$
for all $m\geq 0$ and all \nonperiodic $q'\leq q$, and hence
$\psi(v,u_0)_q$ is periodic too.

Thus, if we apply $\psi$, we have that in $(v,u_1):=\psi(v,u_0)$ all
points in $\Eper\cup E_0\cup E_1$ become periodic, together with
possibly some points in $E_{01}$.  The latter points get in any case
  $u_1$-value equal to $0$. This can be seen as follows. If any
  such point gets $u_1$-value equal to $1$, then all points below it
  get the same $u_{1}$-value. Yet, by definition, these points are
  above some frontier point in $E_1$ and frontier points in $E_1$ get
  $u_1$-value $0$ by the second statement of Lemma~\ref{lem:period}.

If $p_0\in E_0$ has become
periodic, we are done; we are also done if the rank of $p_0$
increases, because this is precisely what we want.
If $p_0$ has not become periodic and its rank has not increased, then
now all the \nonperiodic points below $p_0$ in $(v,u_1)$ have
$u_1$-value $0$ (by the previous
  remark) and have the same rank as $p_0$. Thus, they are the set
$E_0$ computed in $(v, u_1)$ (instead of in $(v,u_0)$) and we know by
the same considerations as above that it is sufficient to apply $\psi$
once more to make them periodic.
\end{proof}
\vskip 1cm
\begin{tikzpicture}[xscale=0.7,yscale=.7]
\draw[very thick] (0,0) to [out=90,in=195] (1.7,4.5); 
\draw[very thick] (1.7,4.5) to [out=10,in=90] (3.4,0); 
\draw (0,0.6) --(3.4,1.8); 
\draw (.2,2.7) --(3.3,2.9); 
\draw (1.7,1.2) --(1.7,2.8); 
\node at (1.9,4.8) {$p_0$};
\node at (1.9,4.5) {$\bullet$};
\node at (1.0,1.9) {$E_0$};
\node at (2.5,1.9) {$E_1$};
\node at (1.7,3.4) {$E_{01}$};
\node at (1.7,0.4) {$E_{per}$};

\node at (4.9,2.7) {$\psi$};
\node at (4.9,2.2) {$\Longrightarrow$};

\draw[very thick] (6,0) to [out=90,in=195] (7.7,4.5); 
\draw[very thick] (7.7,4.5) to [out=10,in=90] (9.4,0); 
\draw (6.4,3.3) --(9.1,3.5); 
\node at (7.9,4.8) {$p_0$};
\node at (7.9,4.5) {$\bullet$};
\node at (7.7,3.9) {$E'_0$};
\node at (7.7,1.4) {$E'_{per}$};

\node at (10.9,2.7) {$\psi$};
\node at (10.9,2.2) {$\Longrightarrow$};

\draw[very thick] (12,0) to [out=90,in=195] (13.7,4.5); 
\draw[very thick] (13.7,4.5) to [out=10,in=90] (15.4,0); 
\node at (13.9,4.8) {$p_0$};
\node at (13.9,4.5) {$\bullet$};
\node at (13.7,1.4) {$E''_{per}$};
\end{tikzpicture}

\vskip 1cm

Notice that some crucial arguments used in the above proof  (starting from  the induction on $\vert e\vert$ itself) make essential use of the fact that evaluations are order-preserving, so such arguments are not suitable for modal logics.

The above proof of Theorem~\ref{thm:main} gives a bound for $N$ which
is not optimal, when compared with the bound obtained via syntactic
means in~\cite{Ruitenburg84} (the syntactic computations
in~\cite{fossacs} for fixpoints convergence are also better). Thus
refining indexes of ultimate periodicity of our sequences within
semantic arguments remains as open question.

\newpage

\bibliographystyle{abbrv}
\bibliography{biblio,biblio2}

\end{document}